\def\Prob{\operatorname{Prob}}
\def\dxb{\underline x_b}
\def\gxb{\overline x_b}
\def\da{\underline a}
\def\ga{\overline a}
\newtheorem{theorem}{Theorem}
\newtheorem{proposition}[theorem]{Proposition}
\newtheorem{lemma}[theorem]{Lemma}
\theoremstyle{remark}
\newtheorem{remark}[theorem]{\bf Remark}
\begin{document}

\title{Cell cycle length and long-time behaviour of an age-size model}
\author[K. Pich\'or]{Katarzyna Pich\'or}
\address{K. Pich\'or, Institute of Mathematics,
University of Silesia, Bankowa 14, 40-007 Kato\-wi\-ce, Poland.}
\email{katarzyna.pichor@us.edu.pl}
\author[R. Rudnicki]{Ryszard Rudnicki}
\address{R. Rudnicki, Institute of Mathematics,
Polish Academy of Sciences, Bankowa 14, 40-007 Katowice, Poland.}
\email{rudnicki@us.edu.pl}
\keywords{Cell cycle, size-age structured model, semigroup of operators, asynchronous exponential growth}
\subjclass[2020]{Primary: 47D06; Secondary: 35F15, 45K05 92D25, 92C37}
\date{March 19th, 2021}

\begin{abstract}
We consider an age-size structured cell population model based on the cell cycle length.
The model is  described by a first order partial differential equation with initial-boundary conditions.
Using the theory of semigroups of  positive operators we establish new criteria for an
asynchronous exponential growth of solutions to such equations.
We discuss the question of exponential size growth of cells.
We study in detail a constant size growth model and a model with target size  division.
We also present versions of the model when the population is heterogeneous.
\end{abstract}

\maketitle

\section{Introduction}
\label{intro}
The cell cycle is a series of events that take place in a cell leading to its replication. It is regulated by a
complex network of protein interactions~\cite{Morgan}.
Modern experimental techniques concerning the cell cycle \cite{CSK,I-B,MC,Perego,T-A,Taniguchi,Vittadello-exp,Wang} allow us not only to understand processes inside single cells, 
but also to build more precise cellular populations models.   

Most of populations are usually heterogeneous. Thus it is important to consider the distribution of the population 
according to some significant parameters such as age, size, maturity, or
proliferative state of cells.  Models of this type are called structured. This type of models are usually represented by partial differential equations 
with some nonlocal perturbations and specific boundary conditions.
Knowing the length of cell cycle  allows us to predict  the development of unicellular populations
and tissues growth and maintenance.

The aim of the paper is twofold. Firstly,  to construct an age-size structured model assuming that we know the growth of individual cells and the distribution
of the  cell cycle length.  Secondly, to study the long-time behaviour of the solution of this model. 

We consider a model which is based on the following assumptions.
The population  grows in steady-state conditions. 
Cells can be described by their age $a$ and size $x$ alone and reproduction occurs by fission into two equal parts.
The distribution of the cell cycle length depends only on the initial size $x_b$ of a cell.
The velocity of growth of an individual cell depends only on its size $x$, i.e. $x'(t)=g(x(t))$.
We also assume that sizes of cells and cell cycle durations are bounded above and bounded away from zero.
Moreover, we assume that the initial daughter cell sizes are distributed in some interval which contains the mother initial size.
We formulate a mathematical model which describes the time evolution of  distribution of cellular age and size.
The model consists of a  partial differential equation with an integral boundary condition and an initial condition.
The novelty of our model is that we use the distribution of the length of the cell cycle, instead of 
a size dependent  probability of division  usually used in size-structured models \cite{BA,DHT,Doumic,GH,GW,Heijmans}.  Such probability is difficult to measure experimentally in contrast to the length of the cell cycle.
 
We check that the solutions of our model generate a continuous semigroup of operators $\{U(t)\}_{t\ge 0}$  on some $L^1$ space. 
Under additional assumption that  $g(2x)\ne 2g(x)$ for some $x$, we prove that the semigroup  $\{U(t)\}_{t\ge 0}$  
has asynchronous exponential growth (AEG), i.e. 
\begin{equation}
\label{AEG1}
e^{-\lambda t}U(t)u_0(x_b,a)\to  Cv(x_b,a)\quad \textrm{for $t\to\infty$},
\end{equation}
where $\lambda$ is the Malthusian parameter and $v$ is a stable initial size and age distribution,
 which does not depend on the initial distribution $u_0$.
The property AEG plays an important role in the study of structured population models \cite{ASW,DHT,GH,Webb-cc},
 because we can expect that the real process should be close to a stationary
state and then it is easy to estimate biological parameters~\cite{LRB}.

The proof of AEG of $\{U(t)\}_{t\ge 0}$ is based on the reduction of  the problem to a stochastic (Markov) semigroup \cite{LiM} by using the Perron eigenvectors and on the theorem that a partially integral stochastic semigroup having a unique invariant density is asymptotically stable 
\cite{PR-jmaa2}.
A similar technique was applied to study  other population models \cite{BPR,Pichor-MCM,RP} 
and to some piecewise deterministic Markov processes  \cite{Mac-Tyr,PR-cell-cyc,RT-K-k}.
 We note that AEG property can be proved by using known results on
compact semigroups but it seems to be difficult to check compactness
and analyze the spectrum of the generator of our semigroup.
It is interesting that even  nonlinear models of cell population (cf. \cite{M-R,RP-M}) can be
reduced to stochastic semigroups.

The last two sections contain corollaries from our results (Section~\ref{s:remarks}) and
some remarks concerning other models and experimental data (Section~\ref{s:other2}).
One of the main points of these sections is what can happen when $g(2x)=2g(x)$ for all $x$\,?
This is an important question because it is usually assumed that the size (volume) of a cell grows exponentially,
which means that $g(x)=\kappa x$ and in this case $g(2x)=2g(x)$. If we include in a model the assumption that
$g(x)=\kappa x$, then we can obtain some paradoxical results. For example, if the size discrepancy between newborn cells is small, then 
the descendants of one cell can have the same size at the same time and the size of the population does not grow exponentially even in 
steady-state conditions. Of course the law of exponential size growth is statistical in nature and we can modify it by considering some random fluctuations 
in the growth rate. Another  problem considered in Section~\ref{s:other2} is how to incorporate into our description 
some models of the cell cycle: a \textit{constant $\Delta$ model} and a \textit{model with target size division}.
Finally, we present versions of the model when the population is heterogeneous, e.g. with an asymmetric division or
with fast and slow proliferation.

\section{Model}
\label{s:model}
We consider the following model of the cell cycle. Denote, respectively, by $a$, $x_b$, and $x$ ---  the age, the initial size, and the  size  of a  cell.
We assume that $\dxb$ and $\gxb$   are the minimum and maximum sizes of newborn cells.
We also assume that cells age with unitary velocity and grow  with a velocity $g(x)$,
i.e. if a cell has the initial size $x_b$, then the size at age $a$ satisfies the equation
\begin{equation}
\label{grow}
x'(a)=g(x(a)),\quad x(0)=x_b.
 \end{equation}
We denote by $\pi_ax_b$ the  solution of (\ref{grow}).
The length $\tau$ of the cell  cycle is a random variable which depends on the initial cell size $x_b$; has values in some interval 
$[\da(x_b),\ga(x_b)]$;  and has the probability density distribution $q(x_b,a)$, i.e.
the integral $\int_0^{A} q(x_b,a)\, da$ is the probability that $\tau\le A$.  
According to the definition of $q$, if a cell has the initial size $x_b$, then $\Phi(x_b,a)=\int_a^{\infty} q(x_b,r)\,dr$ is its \textit{survival function},
i.e. $\Phi(x_b,a)$ is the probability that a cell will not split before age $a$. 
We assume that if the mother cell has size $x$ at the moment of division, then the daughter cells have size $x/2$, i.e. if the initial size of the mother cell is $x_b$ and $\tau=a$ is the length of its cell cycle, then the initial size of the daughter cell is $S_{a}(x_b)=\tfrac12\pi_ax_b$.

Now we collect the assumptions concerning the functions $g$ and $q$ used in the paper:
\vskip1mm

\noindent (A1) $g\colon [\dxb,2\gxb]\to (0,\infty)$
 is a $C^1$-function,

\noindent (A2) $q\colon [\dxb,\gxb]\times [0,\infty)\to [0,\infty)$ is a continuous function and for each $x_b$ the function $a\mapsto q(x_b,a)$ is a probability density,

\noindent (A3)  $0<\da(x_b)<\ga(x_b)<\infty$,  $q(x_b,a)>0$ if $a\in  (\da(x_b),\ga(x_b))$, 
and  $q(x_b,a)=0$ if $a\notin (\da(x_b),\ga(x_b))$ for each $x_b\in [\dxb,\gxb]$, 

\noindent (A4) $x_b\mapsto \da(x_b)$ and $x_b\mapsto \ga(x_b)$ are continuous functions,

\noindent (A5)  $S_{\da(x_b)}(x_b)\ge \dxb$ and $S_{\ga(x_b)}(x_b)\le \gxb$ for each $x_b\in [\dxb,\gxb]$,

\noindent (A6) $S_{\da(x_b)}(x_b)<x_b<S_{\ga(x_b)}(x_b)$ for each $x_b\in (\dxb,\gxb)$. 

Fig.~\ref{r:cell-cyc1} and Fig.~\ref{r:cell-cyc2} illustrate our assumptions. Only assumption (A6) needs some explanation.
We assume that a daughter cell can have the same initial size as the initial size of a mother cell.
In Section~\ref{s:asyp-beh} we will add an extra assumption (A7) which will be used only to show 
the long-time behaviour of the distribution of $(x_b,a)$.
\begin{figure}
\centerline{\includegraphics{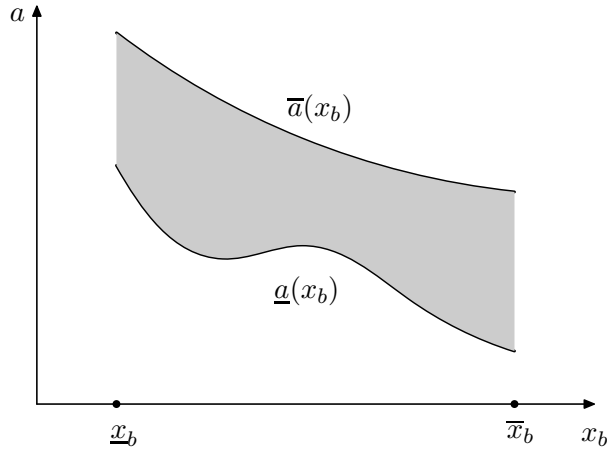}}
\centerline{
\begin{picture}(210,0)(0,0)
\put(28,2){$\underline x_b$}
\put(178,2){$\overline x_b$}
\put(206,1){$x_b$}
\put(95,124){$\overline a(x_b)$}
\put(90,55){$\underline a(x_b)$}
\put(-10,160){$a$}
\end{picture}
}
\vskip2pt
\caption{An example of functions 
$x_b\mapsto \da(x_b)$ and $x_b\mapsto \ga(x_b)$. 
The function $q$ is positive between the graphs of these functions.}
\label{r:cell-cyc1}
\end{figure}

\begin{figure}
\centerline{\includegraphics{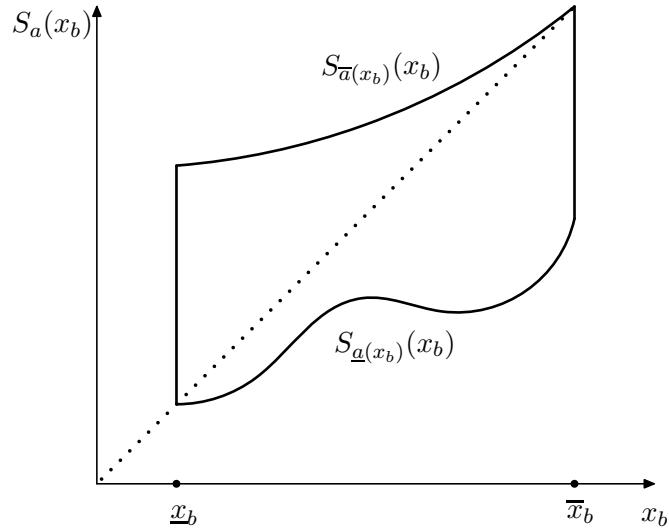}}
\centerline{
\begin{picture}(210,0)(0,0)
\put(28,2){$\underline x_b$}
\put(178,2){$\overline x_b$}
\put(206,1){$x_b$}
\put(85,170){$S_{\overline a(x_b)}(x_b)$}
\put(90,65){$S_{\underline a(x_b)}(x_b)$}
\put(-32,186){$S_a(x_b)$}
\end{picture}
}
\vskip2pt
\caption{The relation between the initial sizes of mother and daughter cells (A5,A6).}
\label{r:cell-cyc2}
\end{figure}

Assume that a cell with initial size $x_b$ and age $a$ splits in the  time interval of the length $\Delta t$  
with probability  $p(x_b,a)\Delta t+o(\Delta t)$, i.e.
\[
p(x_b,a)=\lim_{\Delta t\downarrow 0}
\frac{\operatorname P(\tau\in [a,a+\Delta t] \mid \tau\ge a)}{\Delta t}.
\] 
Since $\Phi(x_b,a)=\exp\big(-\int_0^a p(x_b,r)\,dr\big)$,  an easy computation shows
that
\begin{equation*}
\begin{aligned}
q(x_b,a)&=p(x_b,a)\exp\big(-\textstyle{\int_0^a} p(x_b,r)\,dr\big),\\
\quad p(x_b,a)&=\frac{q(x_b,a)}{\int_a^{\infty} q(x_b,r)\,dr} 
\end{aligned}
 \end{equation*}
for $a<\ga(x_b)$. As   $\Phi(x_b,\ga(x_b))=0$, we have $\int_0^{\ga(x_b)}p(x_b,a)\,da=\infty$.


In order to derive a master equation for the distribution of the population 
with respect to $x_b$ and $a$ we need to introduce a family of Frobenius-Perron operators which describe the relation between the initial sizes of mother and daugther cells.

Let $f(x_b)$  be the density of initial sizes of mother cells that have the fixed length of cell cycle $\tau=a$ for some $a\in (\da,\ga)$,
where $\da=\min\da(x_b)$ and $\ga=\max\ga(x_b)$.
Denote by $P_af(x_b)$ the density of  initial sizes of daughter cells.
\begin{lemma}
Let $x_a$ be the minimum initial size of cells which can split at age $a$. Then
\begin{equation}
\label{F-P7}
P_af(x_b)=\frac{2g(\pi_{-a}(2x_b))}{g(2x_b)}f(\pi_{-a}(2x_b))\mathbf 1_{[S_a(x_a),\gxb]}(x_b).
\end{equation}
\end{lemma}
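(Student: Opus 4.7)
\emph{Proof plan.} I would read the statement as a one-dimensional change of variables for the map $T\colon x_b\mapsto S_a(x_b)=\tfrac12\pi_ax_b$. By (A1) this map is $C^1$, and monotonicity of $\pi_a$ in the initial condition (which follows from $g>0$) together with (A5) shows that $T$ is a diffeomorphism from the set of initial mother sizes allowed to divide at age $a$, namely $[x_a,\gxb]$, onto a subinterval of $[\dxb,\gxb]$. The lemma will then follow from the standard Frobenius--Perron identity $P_af(y)=f(T^{-1}(y))/|T'(T^{-1}(y))|$ on the image of $T$, extended by zero outside.

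The first step is the Jacobian computation. Writing $T'(x_b)=\tfrac12\,\partial_{x_b}\pi_ax_b$, I would identify $\partial_{x_b}\pi_ax_b$ with $g(\pi_ax_b)/g(x_b)$ via the variational equation: differentiating $\tfrac{d}{da}\pi_ax_b=g(\pi_ax_b)$ in $x_b$ shows that $z(a)=\partial_{x_b}\pi_ax_b$ solves $z'=g'(\pi_ax_b)z$ with $z(0)=1$, and the ratio $g(\pi_ax_b)/g(x_b)$ satisfies the same linear ODE with the same initial value, hence the two agree by uniqueness. Setting $y=T(x_b)$, so that $x_b=\pi_{-a}(2y)$ and $\pi_ax_b=2y$, this gives $T'(T^{-1}(y))=g(2y)/(2g(\pi_{-a}(2y)))$, from which the announced expression for $P_af(y)$ follows immediately.

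The last step is to justify the indicator. Since $T$ is strictly increasing and, by (A3)--(A4), the set of $x_b$ for which $q(x_b,a)>0$ is an interval of the form $[x_a,\gxb]$ (possibly with $x_a=\dxb$), the image of $T$ restricted to this set is $[S_a(x_a),S_a(\gxb)]$, and (A5) guarantees $S_a(\gxb)\le\gxb$; so the indicator $\mathbf 1_{[S_a(x_a),\gxb]}$ in the statement is consistent with the natural support of $P_af$. The only nontrivial point is really the Jacobian identity for the flow; I do not expect any serious obstacle beyond being careful about how $x_a$ is defined at the endpoints and about the convention that $P_af$ vanishes where the formula $f(\pi_{-a}(2y))$ has no meaning.
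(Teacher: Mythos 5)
Your proposal is correct and follows essentially the same route as the paper: the defining change-of-variables identity for the Frobenius--Perron operator of $S_a$, followed by a computation of $\tfrac{d}{dx_b}S_a^{-1}(x_b)$ via the variational equation of the flow. The only difference is cosmetic --- the paper solves the linear variational ODE explicitly as an exponential of an integral and then changes variables, whereas you identify $\partial_{x_b}\pi_a x_b$ with $g(\pi_a x_b)/g(x_b)$ by observing that both satisfy the same linear ODE with the same initial value, which is a slightly cleaner way to obtain the identical Jacobian.
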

\begin{proof}
Observe that   
$x_a=\dxb$ if $S_a(\dxb)\ge \dxb$ or $x_a=S_a^{-1}(\dxb)$ otherwise. 
It is clear that $q(x_b,a)=0$ for $x_b< x_a$.
We have 
\begin{equation*}
\int_{S_a(x_a)}^{S_a(y)}P_af(x_b)\,dx_b=\int_{x_a}^{y}f(x_b)\,dx_b\quad \textrm{for $y\ge x_a$}
 \end{equation*}
or, equivalently,  
\begin{equation}
\label{F-P2}
\int_{S_a(x_a)}^{x}P_af(x_b)\,dx_b=\int_{x_a}^{S_a^{-1}(x)}f(x_b)\,dx_b\quad \textrm{for $x\ge S_a(x_a)$}.
 \end{equation}
From (\ref{F-P2}) it follows that  
\begin{equation*}
P_af(x_b)=\frac{d}{dx_b}\big(S_a^{-1}(x_b)\big)f\big(S_a^{-1}(x_b)\big)\mathbf 1_{[S_a(x_a),\gxb]}(x_b).
 \end{equation*}
Using the formula $S_a^{-1}(x_b)=\pi_{-a}(2x_b)$ we check that    
\begin{equation}
\label{F-P4}
\frac{d}{dx_b}\big(S_a^{-1}(x_b)\big)=\frac{2g(\pi_{-a}(2x_b))}{g(2x_b)}.
 \end{equation}
In order to show \eqref{F-P4} we introduce two functions:
\[
\varphi(x_b,a)=\pi_{-a}(2x_b)\quad\textrm{and}\quad
\psi(x_b,a)=\frac{\partial \varphi}{\partial x_b}(x_b,a).
\]
From $\varphi(x_b,0)=2x_b$ we obtain $\psi(x_b,0)=2$. 
Since $\frac{\partial \varphi}{\partial a}(x_b,a)=-g(\varphi(x_b,a))$,
we have
\begin{align*}
\frac{\partial \psi}{\partial a}(x_b,a)
&=\frac{\partial }{\partial a}
\frac{\partial \varphi}{\partial x_b}(x_b,a)
=\frac{\partial }{\partial x_b}\frac{\partial \varphi}{\partial a}
(x_b,a)\\
&=\frac{\partial }{\partial x_b}\big(-g(\varphi(x_b,a))\big)
=-g'(\varphi(x_b,a))\psi(x_b,a).
\end{align*}
We have received the linear equation $\partial\psi/\partial a= -g'(\varphi(x_b,a))\psi$ with the initial condition $\psi(x_b,0)=2$
which has the solution
\begin{equation*}
\psi(x_b,a)=2\exp\bigg(-\int_0^ag'(\varphi(x_b,r))\,dr  \bigg).
 \end{equation*}
Substituting $y=\varphi(x_b,r)$ we receive $dy/dr=-g(y)$ and 
\begin{equation*}
\psi(x_b,a)=2\exp\bigg(\int_{2x_b}^{\pi_{-a}(2x_b)} \frac{g'(y)}{g(y)}\,dy  \bigg)=\frac{2g(\pi_{-a}(2x_b))}{g(2x_b)}. \qedhere
 \end{equation*}
 \end{proof}
The formula \eqref{F-P7} defines a family of operators $P_a\colon L^1[\dxb,\gxb]\to L^1[\dxb,\gxb]$, $a\ge 0$.
The operators $P_a$ are well defined for $a\in (\da,\ga)$ but we extend the definition of $P_a$ setting  $P_af\equiv 0$ for others $a$'s. 
For each $a$ the operator $P_a$ is linear and \textit{po\-si\-tive}, i.e. if $f\ge 0$, then $P_af\ge 0$.
Moreover  $\|P_af\|_{L^1}\le \|f\|_{L^1}$.
The adjoint operator of $P_a$
acts on the space $L^{\infty}[\dxb,\gxb]$
and it is given by $P_a^*f(x_b)=f(S_a(x_b))=f(\frac12\pi_ax_b)$ for $x_b\ge x_a$ and $P_a^*f(x_b)=0$ for $x_b< x_a$. 

We denote by $u(t,x_b,a)$ the number of individuals in a population  having initial size $x_b$ and age $a$ at time $t$.
Then,  according to our assumptions concerning the model, $p(x_b,a)u(t,x_b,a)\Delta t$ is the number of cells of initial size $x_b$ and age $a$ which split in a time interval of the length $\Delta t$. It means that $2\Delta t\int_0^{\infty}\Big(P_a\big(p(\cdot,a)u(t,\cdot,a)\big)\Big)(x_b)\,da $ is the number of new born cells in this time interval. 
It should be noted that the operator $P_a$ in the last integral acts on the function $\psi(x_b)=p(x_b,a)u(t,x_b,a)$ at fixed values $t$ and $a$.  
If there are no limitations concerning the growth of the population and all cells split, 
then the function $u$ satisfies the following initial-boundary problem:
\begin{align}
\label{eq1}
&\frac{\partial u}{\partial t}(t,x_b,a)
 +\frac{\partial u}{\partial a}(t,x_b,a)
 =-p(x_b,a)u(t,x_b,a),\quad  a<\ga(x_b),\\
&u(t,x_b,0)=2\int_0^{\infty}\Big(P_a\big(p(\cdot,a)u(t,\cdot,a)\big)\Big)(x_b)\,da, 
\label{eq2}\\
&u(0,x_b,a)=u_0(x_b,a). 
\label{eq3}
\end{align}

We assume that $u_0$ is a nonnegative measurable function such that
\begin{equation}
\label{def-u_0}
\int_{\dxb}^{\gxb}\int_0^{\infty}
u_0(x_b,a)\Psi(x_b,a)\,da\,dx_b<\infty,
\end{equation} 
where $\Psi(x_b,a)=\exp\big(\int_0^a p(x_b,\bar a)\,d\bar a\big)$.
In  (\ref{def-u_0}) we have assumed that the initial condition $u_0$ is integrable with weight 
$\Psi(x_b,a)=\Phi(x_b,a)^{-1}$ because  $\Phi(x_b,a)$ is the fraction of cells which will survive beyond age $a$.  
Since $\ga(x_b)$ is the maximum age of a cell with initial size $x_b$, 
it is reasonable to consider variables $x_b$ and $a$ only from the set 
\[
X=\{(x_b,a)\colon   \dxb\le x_b\le\gxb,\,\,\, 0\le a\le\ga(x_b)\}
\]
(see Fig.~\ref{r:cell-cyc3}).
\begin{figure}
\centerline{\includegraphics{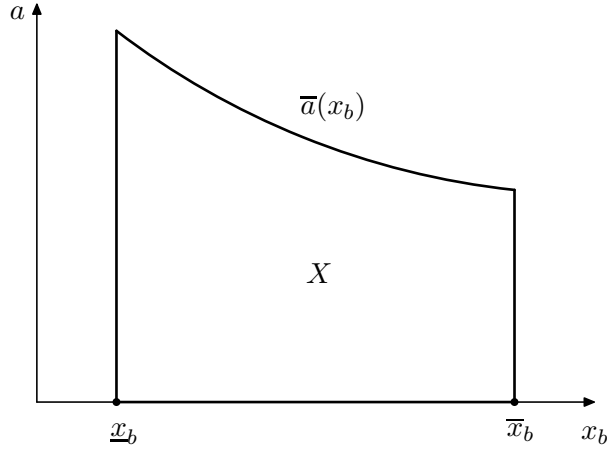}}
\centerline{
\begin{picture}(210,0)(0,0)
\put(28,2){$\underline x_b$}
\put(178,2){$\overline x_b$}
\put(206,1){$x_b$}
\put(100,124){$\overline a(x_b)$}
\put(-10,160){$a$}
\put(102,60){$X$}
\end{picture}
}
\vskip2pt
\caption{The set $X$.}
\label{r:cell-cyc3}
\end{figure}
Though we consider $a\le\ga(x_b)$, it will be convenient to keep the notation of integral  $\int_0^{\infty}$ with respect to $a$ as in 
formula (\ref{eq2}) assuming that $u(t,x_b,a)=0$ for $a> \ga(x_b)$.

Let $\mathcal B(X)$ be the $\sigma$-algebra of Borel subsets of $X$, 
$\ell$ be the Lebesgue measure on $X$, and $E$ be 
the space $L^1(X)=L^1(X,\mathcal B(X),\ell)$. By $\|\cdot\|_E$ we denote the  norm in $E$. 

The main purpose of the paper is to show that the solutions of the system~\eqref{eq1}--\eqref{eq3} have asynchronous
exponential growth \eqref{AEG1}. 
The AEG property can be written in the following way:
\begin{equation}
\label{AEG11}
\lim_{t\to\infty} e^{-\lambda t}u(t,x_b,a)=\alpha(u_0)v(x_b,a),
\end{equation}
where the limit is in the space $E$, $\alpha$ is a linear and bounded functional on $E$, and $v\in E$ does not depend on $u_0$
(see Theorem~\ref{th:long-time-u}).
We prove this fact under an additional assumption that  $g(2x)\ne 2g(x)$ for some $x$.
The schedule of the proof is the following. In Section~\ref{exist-sol} 
we replace the system~\eqref{eq1}--\eqref{eq3} by one in which the first equation of the system has zero on the right-hand side. 
Then we construct a $C_0$-semigroup of positive operators $\{T(t)\}_{t\ge 0}$ on the space $L^1(X)$ corresponding to the new system.
In Section~\ref{s:eigen} we prove that the infinite\-simal generator $\mathcal A$ of the semigroup 
$\{T(t)\}_{t\ge 0}$ and the adjoint  operator $\mathcal A^*$ have positive 
 eigenvectors, $f_i$ and $v$, respectively,  for some eigenvalue $\lambda$.
In Section~\ref{s:asyp-beh} we introduce a
semigroup $\{P(t)\}_{t\ge 0}$ given by $P(t)f=e^{-\lambda t}T(t)f$ defined on the space $E_1=L^1(X,\mathcal B(X),\mu)$
with the measure $\mu$ given by $d\mu=v\, dx_bda$.
We check that  $\{P(t)\}_{t\ge 0}$ is a stochastic semigroup on $E_1$ and  that $f_i$ is the unique invariant density 
of $\{P(t)\}_{t\ge 0}$. We also formulate some general theorem concerning asymptotic stability of stochastic semigroups.
We apply this theorem to the semigroup $\{P(t)\}_{t\ge 0}$ and prove its asymptotic stability. 
We translate this result in terms of the semigroup $\{T(t)\}_{t\ge 0}$.
Finally we return to the semigroup $\{U(t)\}_{t\ge 0}$ generated by the system~\eqref{eq1}--\eqref{eq3}
and we show that it has the AEG property.

Most of experiments concerning microorganisms are conducted in chemo\-stats, where cells can be grown in a physiological steady state under constant environmental conditions. Then cells are removed from the system with the outflow with rate $D$. In this case we add to the right-hand side of equation (\ref{eq1}) the term $-Du(t,x_b,a)$. Similarly, if cells die with rate $d(t,x_b,a)$, then we add to the right-hand side of  (\ref{eq1})  the term $-d(t,x_b,a)u(t,x_b,a)$.   
One can consider more advanced models with cellural competition, but in this case all functions $q$, $g$, and $d$ can depend also on the total number of cells 
and we do not investigate such models.

\section{A semigroup approach}
 \label{exist-sol}
It is convenient to substitute $z(t,x_b,a)=u(t,x_b,a)\Psi(x_b,a)$  and  $z_0(x_b,a)=u_0(x_b,a)\Psi(x_b,a)$ in (\ref{eq1})--(\ref{eq3}).
Then the system (\ref{eq1})--(\ref{eq3}) takes the form 
\begin{align}
\label{eq1-n}
&\frac{\partial z}{\partial t}(t,x_b,a)
 +\frac{\partial z}{\partial a}(t,x_b,a) =0,\quad  a<\ga(x_b),\\
&z(t,x_b,0)=2\int_0^{\infty}\Big(P_a\big(q(\cdot,a)z(t,\cdot,a)\big)\Big)(x_b)\,da, 
\label{eq2-n}\\
&z(0,x_b,a)=z_0(x_b,a). 
\label{eq3-n}
\end{align}
Observe that $z_0$ is an integrable function.
Since the expression on the right-hand side of (\ref{eq2-n}) will be used quite often, so instead of it, we will use the shortened notation $\mathcal Pz(t,x_b)$. Thus, equation (\ref{eq2-n}) takes the form $z(t,x_b,0)=\mathcal Pz(t,x_b)$. We also use a simplified  notation $\mathcal Pf(x_b)$ for the expression $2\int_0^{\infty}\Big(P_a\big(q(\cdot,a)f(\cdot,a)\big)\Big)(x_b)\,da$.

We consider the solutions of (\ref{eq1-n})--(\ref{eq3-n}) as continuous functions $z\colon [0,\infty)\to E$ 
defined by $z(t)(x_b,a)=z(t,x_b,a)$, where $z(t)$ is the solution of the evolution 
\[
z'(t)=\mathcal Az(t),\quad z(0)=z_0,
\]
with the operator
\[
\mathcal Af(x_b,a)=-\frac{\partial f}{\partial a}(x_b,a)
 \]
which has the domain
 \[
 \mathcal D(\mathcal A)
=\Big\{f\in E,\quad \frac{\partial f}{\partial a}\in E,\quad
f(x_b,0)=\mathcal Pf(x_b)\Big\}.
\]

Since a function $f\in E$ is only almost everywhere defined we need to clarify the formula for $f(x_b,0)$.
Consider the (partial) Sobolev space 
\[
W_1(X)=\Big\{f\in E\colon \frac{\partial f}{\partial a}\in E\Big\}
\]   
with the norm $\|f\|_{W_1(X)}=\|f\|_E+\Big\|\dfrac{\partial f}{\partial a}\Big\|_E$. 
In the space $W_1(X)$ we introduce a trace operator $\mathcal T\colon W_1(X)\to L^1[\dxb,\gxb]$,  $\mathcal Tf(x_b)=f(x_b,0)$, 
in the following way. First we show that there exists a constant $c>0$ such that 
for each function $f\in  W_1(X)\cap C(X)$ we have 
\[
\|\mathcal Tf\|_{L^1[\dxb,\gxb]}\le c\|f\|_{W_1(X)}
\]
(see \cite{Evans} Theorem 1, Chapter 5.5).
Since the set $W_1(X)\cap C(X)$ is dense in $W_1(X)$ we can extend $\mathcal T$ uniquely  to a linear bounded operator
on the whole space $W_1(X)$. 
\begin{proposition}
\label{D(A)}
The operator $\mathcal A$ with the domain
\[
\mathcal D(\mathcal A)=\big\{f\in W_1(X)\colon \mathcal Tf =\mathcal Pf \big\}
\]
generates a positive $C_0$-semigroup $\{T(t)\}_{t\ge 0}$ on $E$.
\end{proposition}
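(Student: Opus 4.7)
The plan is to construct $\{T(t)\}_{t\ge 0}$ by solving the transport equation along characteristics and absorbing the nonlocal trace relation $\mathcal Tf=\mathcal Pf$ via a Volterra-type fixed-point equation for the boundary value, after which I would identify the generator of the resulting semigroup with $\mathcal A$ on the stated domain.

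First, I would introduce the ``free'' shift operator $\mathcal A_0 f=-\partial f/\partial a$ with domain $\{f\in W_1(X):\mathcal Tf=0\}$. It generates the positive contraction $C_0$-semigroup $T_0(t)f(x_b,a)=f(x_b,a-t)$ for $a\ge t$ and $0$ otherwise, well defined on $E$ since $a\le\ga(x_b)$ implies $a-t\le\ga(x_b)$. Writing a prospective solution along characteristics, the candidate $z(t,x_b,a)$ equals $z_0(x_b,a-t)$ for $a\ge t$ and $z_b(t-a,x_b)$ for $a<t$, where $z_b(s,x_b):=z(s,x_b,0)$. Substituting this ansatz into $z_b(t,\cdot)=\mathcal Pz(t,\cdot)$ and splitting the $a$-integral at $a=t$ yields the Volterra equation
\[
z_b(t,x_b)=\mathcal P\bigl(T_0(t)z_0\bigr)(x_b)+2\int_0^{t}\bigl(P_a[q(\cdot,a)\,z_b(t-a,\cdot)]\bigr)(x_b)\,da .
\]
Because $q$ vanishes for $a\notin(\da(x_b),\ga(x_b))$ and $\sup_{x_b}\ga(x_b)<\infty$ by (A3)--(A4), the kernel is effectively supported in a bounded $a$-interval; combined with $\|P_a\|\le 1$ in $L^1$ and the boundedness of $q$ on its compact support, standard successive approximation in $C([0,t_0],L^1[\dxb,\gxb])$ delivers a unique solution $z_b$, depending linearly and continuously on $z_0\in E$, with every iterate preserving non-negativity. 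Setting $T(t)z_0:=z(t,\cdot,\cdot)$ reconstructed from the characteristic formula, the semigroup property follows from the uniqueness of the Volterra solution together with $T_0(t+s)=T_0(t)T_0(s)$, strong continuity at $t=0$ from the $L^1$-continuity of translations, and positivity from that of $T_0$, $P_a$, and $q$.

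To identify the generator with $\mathcal A$, I would argue in both directions. For $f\in\mathcal D(\mathcal A)$, the compatibility $\mathcal Tf=\mathcal Pf$ is exactly what makes the two branches of the characteristic formula join continuously across $a=t$; a direct difference-quotient computation then shows $(T(t)f-f)/t\to -\partial f/\partial a$ in $E$, so $f$ lies in the generator's domain and the generator acts as $\mathcal A$ on it. Conversely, if $f$ is in the generator's domain, then $E$-convergence of $(T(t)f-f)/t$ gives $\partial f/\partial a\in E$, hence $f\in W_1(X)$, and the matching of the two branches at $a=0$ forces the trace identity $\mathcal Tf=\mathcal Pf$.

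I expect the main obstacle to be this last identification, in particular the interpretation of the trace: the construction produces only a pointwise matching along characteristics, which must be promoted to the equality $\mathcal Tf=\mathcal Pf$ in the sense of the continuous extension of $\mathcal T$ from $W_1(X)\cap C(X)$ to all of $W_1(X)$. This is handled, as usual in trace theory, by first running the computation on continuous representatives and then passing to the limit via density of $W_1(X)\cap C(X)$ in $W_1(X)$ together with the bound $\|\mathcal Tf\|_{L^1[\dxb,\gxb]}\le c\|f\|_{W_1(X)}$ invoked just before the statement.
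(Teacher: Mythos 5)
Your construction is sound, but it takes a genuinely different route from the paper. The paper does not build the semigroup by hand: it invokes an abstract boundary-perturbation theorem (Greiner's method as extended to unbounded $L^1$ perturbations by Gwi\.zd\.z and Tyran-Kami\'nska), and the whole proof consists of verifying four hypotheses --- that the trace $\mathcal T$ restricted to the nullspace of $\lambda I-A$ has the positive right inverse $\Upsilon(\lambda)f(x_b,a)=f(x_b)e^{-\lambda a}$, that $\|\mathcal P\Upsilon(\lambda)\|<1$ for $\lambda>\ln 2/\da$ (using $\int_0^\infty 2e^{-\lambda a}q(x_b,a)\,da\le 2e^{-\lambda\da}$), that the free shift $A_0$ generates a positive semigroup, and a mass-balance inequality between $\int_X Af$ and $\int\mathcal Tf$. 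Your approach instead solves the problem directly along characteristics and closes the boundary condition by a Volterra fixed point for $z_b(t,\cdot)$; this is essentially the ``time-domain'' version of what the paper itself flags in Remark~\ref{aler-proof} as an alternative proof via the explicit resolvent $(I-\mathcal P_\lambda)^{-1}R(\lambda,A_0)$ and Hille--Yosida. What the paper's route buys is brevity and a clean source of positivity and the growth bound from the abstract theorem; what yours buys is a self-contained, explicit representation of $T(t)$ (of the kind the paper later needs anyway, via the Dyson--Phillips expansion in Lemma~\ref{lemma-partially-integral}). The one place where your sketch is thinner than it should be is the converse half of the generator identification: ``the matching of the two branches at $a=0$ forces the trace identity'' is not yet an argument. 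The cleanest repair is to prove that $\lambda I-\mathcal A$ is surjective from $\mathcal D(\mathcal A)$ onto $E$ for $\lambda>\ln2/\da$ (exactly the resolvent formula of Remark~\ref{aler-proof}, whose invertibility condition is the paper's estimate $\|\mathcal P\Upsilon(\lambda)\|<1$); combined with the inclusion $\mathcal A\subset B$ you already establish and the injectivity of $\lambda I-B$ for the generator $B$, this forces $\mathcal D(\mathcal A)=\mathcal D(B)$. With that step added, your proof is complete and correct.
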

We recall that a family $\{T(t)\}_{t\ge0}$ of linear
operators on a Banach space $E$
is a $C_0$-\textit{semigroup}
or \textit{strongly continuous semigroup}
if it satisfies the following conditions:
\begin{enumerate}[\rm(a)]
\item \ $T(0)=I$,  i.e., $T(0)f =f$ for $f\in E$,
\item \ $T(t+s)=T(t) T(s)\quad \textrm{for}\quad
s,\,t\ge0$,
\item \  for each $f\in E$ the function
$t\mapsto T(t)f$ is continuous.
\end{enumerate}

The proof of this result is based on a perturbation method related to
operators with boundary conditions developed in \cite{greiner}
and an extension of this method to unbounded perturbations in $L^1$ space
in \cite{GMTK} (see Theorem~\ref{MTK}  below).

\begin{theorem}
\label{MTK}
Let $(\Gamma,\Sigma,m)$, $(\Gamma_{\partial},\Sigma_{\partial},m_{\partial})$ be $\sigma$-finite measure spaces
and let $L^1=L^1(\Gamma,\Sigma,m)$ and $L_{\partial}^1=L^1(\Gamma_{\partial},\Sigma_{\partial},m_{\partial})$.
Let $\mathcal{D}$ be a linear subspace of~$L^1$.
We assume that
$A\colon \mathcal D\to L^1$ and $\Upsilon_0,\Upsilon\colon \mathcal D\to L_{\partial}^1$ are linear operators satisfying the following conditions: 
\begin{enumerate}[\rm(1)]
\item
for each $\lambda>0$, the operator $\Upsilon_0\colon \mathcal{D}\to L^1_{\partial}$ restricted to the nullspace 
$\mathcal N(\lambda I-A)=\{f\in\mathcal D\colon \lambda f- Af=0\}$ has a  positive right inverse $\Upsilon(\lambda)\colon L^1_{\partial}\to \mathcal N(\lambda I-A)$, i.e.  
$\Upsilon_0\Upsilon(\lambda)f_{\partial}=f_{\partial}$ for $f_{\partial} \in L^1_{\partial}$; 
\item
 the operator $\Upsilon\colon \mathcal{D} \to L^1_{\partial}$ is positive and there is $\omega>0$ such that $\|\Upsilon \Upsilon(\lambda)\|<1$ for $\lambda>\omega$;
\item the operator  $A_0=A\big|_{\mathcal D(A_0)} $, where $\mathcal{D}(A_0)=\{f\in \mathcal{D}\colon  \Upsilon_0f=0\}$,
 generates a positive $C_0$-semigroup on $L^1$;
\item
$\int_{\Gamma} Af(x)\,m(dx)\le \int_{\Gamma_{\partial}} \Upsilon_0 f(x_\partial)\,m_{\partial}(dx_\partial)$
for $f\in \mathcal{D}_+=\{f\in \mathcal{D}\colon f\ge 0\}$.
\end{enumerate}
Then $A$ with the domain $\mathcal{D}(A)=\{f\in \mathcal{D}\colon  \Upsilon_0 f=\Upsilon f\} $  generates a  positive  semigroup on $L^1$.
\end{theorem}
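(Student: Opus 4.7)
My plan is to apply the Hille--Yosida generation theorem for positive $C_0$-semigroups on $L^1$ by constructing the resolvent $R(\lambda):=(\lambda I-A)^{-1}$ explicitly as a Greiner-style boundary perturbation of $R_0(\lambda):=(\lambda I-A_0)^{-1}$, whose existence and positivity are delivered by hypothesis (3). The three main tasks are: build $R(\lambda)$ via a Neumann series, show it is positive, and extract a Hille--Yosida norm bound from the integral inequality in (4).

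Fix $\lambda>\omega$ and seek $f\in\mathcal D(A)$ with $(\lambda I-A)f=g$. I decompose $f=R_0(\lambda)g+\Upsilon(\lambda)f_\partial$ for an unknown boundary datum $f_\partial\in L^1_\partial$. The first summand lies in $\mathcal D(A_0)$, so $\Upsilon_0 R_0(\lambda)g=0$ and $(\lambda I-A)R_0(\lambda)g=g$; the second lies in $\mathcal N(\lambda I-A)$ by (1), so $(\lambda I-A)\Upsilon(\lambda)f_\partial=0$ and $\Upsilon_0\Upsilon(\lambda)f_\partial=f_\partial$. Thus any such $f$ already satisfies $(\lambda I-A)f=g$, and imposing $\Upsilon_0 f=\Upsilon f$ produces the fixed-point equation
\[
f_\partial=\Upsilon R_0(\lambda)g+\Upsilon\Upsilon(\lambda)f_\partial,
\]
which, thanks to $\|\Upsilon\Upsilon(\lambda)\|<1$ from (2), has the unique Neumann-series solution $f_\partial=\sum_{n\ge 0}(\Upsilon\Upsilon(\lambda))^n\Upsilon R_0(\lambda)g$. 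This defines $R(\lambda)$ on all of $L^1$, establishing $\lambda\in\rho(A)$; positivity of $R(\lambda)$ is then immediate because every ingredient — $R_0(\lambda)$ by (3), $\Upsilon$ by (2), $\Upsilon(\lambda)$ by (1) — is positive, and Neumann sums of positive operators are positive.

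For the Hille--Yosida norm estimate, I take $g\ge 0$, so $f=R(\lambda)g\ge 0$ and $\Upsilon_0 f=\Upsilon f=f_\partial$. Integrating $\lambda f-Af=g$ over $\Gamma$ and applying (4) yields
\[
\lambda\int_{\Gamma}f\,m(dx)=\int_{\Gamma}g\,m(dx)+\int_{\Gamma}Af\,m(dx)\le\int_{\Gamma}g\,m(dx)+\int_{\Gamma_{\partial}}f_\partial\,m_{\partial}(dx_\partial).
\]
Iterating $f_\partial=\Upsilon R_0(\lambda)g+\Upsilon\Upsilon(\lambda)f_\partial$ and using $\|\Upsilon\Upsilon(\lambda)\|<1$ lets me absorb the boundary contribution into a geometric series bounded by $\|g\|_{L^1}$, which gives the required estimate $\|(\lambda-\omega_1)R(\lambda)\|\le M$ for suitable $M,\omega_1$ and all large $\lambda$. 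Density of $\mathcal D(A)$ follows because $\mathcal D(A_0)\subset\overline{\mathcal D(A)}$ (perturb any $f_0\in\mathcal D(A_0)$ by small elements of the form $\Upsilon(\lambda_n)h_n$) and $\mathcal D(A_0)$ itself is dense by (3). The Hille--Yosida--Phillips theorem for positive $C_0$-semigroups on $L^1$ then concludes that $A$ generates a positive $C_0$-semigroup.

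I expect the main obstacle to be the fact that $\Upsilon$ and $\Upsilon(\lambda)$ need only be individually unbounded — only their composition is controlled, via (2). Care is therefore needed to check that the formal Neumann-series output actually lies in $\mathcal D$ with $Af\in L^1$, and that the intermediate quantity $\Upsilon R_0(\lambda)g$ is $L^1_\partial$-integrable with the right domination. The established route, as carried out in \cite{GMTK}, is to approximate $\Upsilon$ by a monotone sequence of bounded operators $\Upsilon_n\uparrow\Upsilon$, apply the classical (bounded) Greiner perturbation theorem to each $\Upsilon_n$ to obtain semigroups $\{T_n(t)\}_{t\ge 0}$, and pass to the limit using positivity and monotone convergence for the resolvents to identify the limit with the desired positive $C_0$-semigroup generated by $A$.
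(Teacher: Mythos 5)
First, a point of comparison: the paper does not prove Theorem~\ref{MTK} at all --- it is imported verbatim from \cite{GMTK} and only \emph{applied} (in the proof of Proposition~\ref{D(A)}). So there is no in-paper proof to measure your attempt against; your proposal has to stand on its own as a proof of the cited result.

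As such, it has a genuine gap at the decisive step. The resolvent construction $f=R_0(\lambda)g+\Upsilon(\lambda)f_\partial$ with $f_\partial=\sum_{n\ge0}(\Upsilon\Upsilon(\lambda))^n\Upsilon R_0(\lambda)g$ is the correct formal skeleton, and positivity of the resulting operator is indeed immediate. But your Hille--Yosida estimate does not go through as written: you claim that iterating $f_\partial=\Upsilon R_0(\lambda)g+\Upsilon\Upsilon(\lambda)f_\partial$ lets you bound the boundary contribution ``by $\|g\|_{L^1}$'' via a geometric series, yet hypotheses (1)--(4) control only the \emph{composition} $\Upsilon\Upsilon(\lambda)$, not the seed term $\Upsilon R_0(\lambda)g$. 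Since $\Upsilon$ is unbounded, nothing in the assumptions gives $\|\Upsilon R_0(\lambda)g\|_{L^1_\partial}\le C\|g\|_{L^1}$ with $C$ uniform in $\lambda$ (an everywhere-defined positive operator is automatically bounded, but only with a $\lambda$-dependent constant), so integrating (4) yields at best $\lambda\|f\|\le\|g\|+\|f_\partial\|$ with an uncontrolled right-hand side; and even a single-$\lambda$ bound $\|R(\lambda)\|\le M/(\lambda-\omega_1)$ with $M>1$ would not suffice without control of all powers of the resolvent. This is exactly the obstruction that forces the Kato--Voigt-style monotone approximation argument of \cite{GMTK}, which you correctly name in your last paragraph but do not carry out: the monotonicity of the approximating resolvents, the identification of the limit pseudo-resolvent as the resolvent of $A$ with the stated domain (rather than of a proper extension), and the contractivity estimate all live in that omitted step. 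Two smaller omissions: injectivity of $\lambda I-A$ on $\mathcal D(A)$ is never checked (it does follow, since (3) forces $\Upsilon_0$ to be injective on $\mathcal N(\lambda I-A)$ and then $(I-\Upsilon\Upsilon(\lambda))\Upsilon_0f=0$ gives $f=0$), and your density argument needs $\Upsilon(\lambda_n)h_n\to0$ with $h_n=(I-\Upsilon\Upsilon(\lambda_n))^{-1}\Upsilon f_0$, which requires a uniform bound on $\|\Upsilon\Upsilon(\lambda_n)\|$ away from $1$ and a decay estimate for $\Upsilon(\lambda)$ that should be derived (e.g.\ from (4) applied to elements of the nullspace) rather than assumed.
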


\begin{proof}[Proof of Proposition~$\ref{D(A)}$]
First we translate our notation to that from Theorem~\ref{MTK}.
Let $\Gamma=X$, $\Gamma_{\partial}=[\dxb,\gxb]$, $L^1=E$,  $L_{\partial}^1=L^1[\dxb,\gxb]$, 
$\Upsilon_0=\mathcal T$, $\Upsilon =\mathcal P$, $Af=-\frac{\partial f}{\partial a}$ and
$\mathcal D=\Big\{f\in E\colon\,\, \frac{\partial f}{\partial a}\in E\}$.

(1): Since $Af=-\frac{\partial f}{\partial a}$, the nullspace  $\mathcal N(\lambda I-A)$ is the set of functions $f\in \mathcal D$ satisfying equation
 \[
\frac{\partial f}{\partial a} +\lambda f=0.
\]
Solving this equation we obtain that $f(x_b,a)=f(x_b,0)e^{-\lambda a}$.
Thus the operator $\Upsilon_0$ restricted to $\mathcal N(\lambda I-A)$ is invertible 
and the inverse operator $\Upsilon(\lambda)\colon L^1[\dxb,\gxb] \to \mathcal N(\lambda I-A)$ given by 
$\Upsilon(\lambda)f(x_b,a) =f(x_b)e^{-\lambda a}$ is positive.

(2): Since $\Upsilon=\mathcal P$  we  check that 	$\|\mathcal P\Upsilon(\lambda)\|<1$ for $\lambda>\omega=\ln2/\da$.
Take $f\in L^1[\dxb,\gxb]$, $f\ge 0$,  and  let  $\Theta_{\lambda}(x_b,a)=2e^{-\lambda a}q(x_b,a)$.
Then 
\begin{align*}
\int_{\dxb}^{\gxb}(\mathcal P\Upsilon(\lambda)f)(x_b)\,dx_b&=\int_{\dxb}^{\gxb}\int_0^{\infty} P_a (f(\cdot)\Theta_{\lambda}(\cdot,a))(x_b)\,da\,dx_b\\
&\le  \int_{\dxb}^{\gxb}\int_0^{\infty}f(x_b)\Theta_{\lambda}(x_b,a)\,da\,dx_b.
\end{align*}
Since
\[
\int_0^{\infty}\Theta_{\lambda}(x_b,a)\,da=\int_{\da}^{\ga}2e^{-\lambda a}q(x_b,a)\,da\le 2e^{-\lambda \da}<1
\]
for $\lambda>\omega=\ln2/\da$, we have $\|\mathcal P\Upsilon(\lambda)\|<1$ for $\lambda>\omega$.

(3): The operator $A_0$ generates a positive $C_0$-semigroup $\{T_0(t)\}_{t\ge 0}$ on $E$ given by 
\[
T_0(t)f(x_b,a)=
\begin{cases}
f(x_b,a-t)\quad\textrm{for $a> t$,}\\
0 \quad\textrm{for $a<t$}.
\end{cases}
\]

(4): If $f\in \mathcal{D}_+$ then
\[
\int_X Af(x_b,a)\,dx_b\,da
=-\int_X\frac{\partial f}{\partial a}(x_b,a)  \,da \,dx_b
=\int_{\dxb}^{\gxb}\mathcal Tf(x_b)\,dx_b.
\]
Since $\Upsilon_0=\mathcal T$, $\Gamma=X$, and $\Gamma_{\partial}=[\dxb,\gxb]$ we have 
\[
\int_X Af(x_b,a)\,dx_b\,da-\int_{\dxb}^{\gxb} \Upsilon_0 f(x_b,a)\,dx_b=0.\qedhere
\]
\end{proof}

\begin{remark}
\label{aler-proof}
It is not difficult to check that 
the resolvent $R(\lambda,\mathcal A)=(\lambda I-\mathcal A)^{-1}$ exists for sufficiently large $\lambda>0$ and it is given by
the formula
\begin{equation}
\label{R(l,a)}
R(\lambda,\mathcal A)=(I-\mathcal P_{\lambda})^{-1}R(\lambda,A_0),
\end{equation}
where 
 \[
\mathcal P_{\lambda}f(x_b,a)=e^{-\lambda a}\mathcal Pf(x_b),
\quad
R(\lambda,A_0)f(x_b,a)=\int_0^af(x_b,r)e^{\lambda(r-a)}\,dr
\]
for $f\in E$. 
Another proof of Proposition~$\ref{D(A)}$ can be done directly using formula \eqref{R(l,a)} and the Hille--Yosida theorem.
\end{remark}

\begin{remark}
\label{classical-solution}
If $z_0\in \mathcal D(\mathcal A)$ and $z(t)=T(t)z_0$, then $z(t)\in \mathcal D(\mathcal A)$, 
$z'(t)$ exists and $z'(t)=\mathcal Az(t)$.
Assume that the function $z_0$ and the derivative $\dfrac{\partial z_0}{\partial a}$ are continuous bounded functions   
and the consistency condition  $z_0(x_b,0)=\mathcal Pz_0(x_b)$ holds.
Then the problem (\ref{eq1-n})--(\ref{eq3-n}) has a 
unique classical solution. By the \textit{classical solution} we understand a continuous function $z$, which has     
continuous derivatives $\dfrac{\partial z}{\partial a}$ and $\dfrac{\partial z}{\partial t}$ outside the set 
$\mathcal Z=\{(t,x_b,a)\colon a=t,\,(x_b,a)\in X\}$,
$z$ satisfies (\ref{eq1-n}) outside  $\mathcal Z$, and $z$ satisfies conditions (\ref{eq2-n})--(\ref{eq3-n}).
In this case $z_0\in \mathcal D(\mathcal A)$ and $z(t,x_b,a)= T(t)z_0(x_b,a)$, i.e. 
classical and ``semigroup" solutions are identical. 
\end{remark}

\section{Eigenvectors of $\mathcal A$ and $\mathcal A^*$}
 \label{s:eigen}
 Our aim is to study the long-time behaviour  of the semigroup $\{T(t)\}_{t\ge 0}$. The strategy is the following. First we check  
that the adjoint operator $\mathcal A^*$ of $\mathcal A$ has a positive eigenvector $v=v(x_b,a)$ corresponding to some positive eigenvalue $\lambda$.
Then we introduce the semigroup $\{P(t)\}_{t\ge 0}$ given by $P(t)f=e^{-\lambda t}T(t)f$ defined on the space $E_1=L^1(X,\mathcal B(X),\mu)$
with the measure $\mu$ given by $d\mu=v\, d\ell$.
Then we prove that semigroup $\{P(t)\}_{t\ge 0}$ has an invariant density $f_i$  and $\lim_{t\to\infty} P(t)f=f_i$ 
for each density $f$. Finally, we translate this result in terms of the semigroup $\{T(t)\}_{t\ge 0}$.
 
We first study some properties of the adjoint operator of $\mathcal A$. Denote by $H$ the operator $H\colon E\to L^1[\dxb,\gxb]$
defined by $Hf(x_b)=\mathcal Pf(x_b)$.  
Then the operator 
$H^*\colon L^{\infty}[\dxb,\gxb] \to E^*$ is given by
\begin{equation}
\label{H*}
H^*f(x_b,a)=2q(x_b,a)P^*_af(x_b)=2q(x_b,a)f(S_a(x_b)).
\end{equation}
It should be noted that we omit in the last product the factor $\mathbf 1_{[x_a,\gxb]}(x_b)$ because $q(x_b,a)=0$ for $x_b\le x_a$.

\begin{lemma}
\label{eigA*}
Let 
\begin{equation*}
\mathcal D^{\odot}=\{f\in C(X),\,\,\, \frac{\partial f}{\partial a}\in C(X),\,\,\,f(x_b,\ga(x_b))=0,\,\,\,\frac{\partial f}{\partial a}(x_b,\ga(x_b))=0\}.
\end{equation*} 
Then $\mathcal D^\odot\subset \mathcal D(\mathcal A^*)$ and 
\begin{equation}
\label{eigenvec-A}
\mathcal A^*f=\frac{\partial f}{\partial a}+H^*{\tilde f}\quad\textrm{for $f\in \mathcal D^{\odot}$},   
\end{equation}
where $\tilde f(x_b)=f(x_b,0)$.
\end{lemma}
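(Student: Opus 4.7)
The plan is to compute, for $f\in \mathcal D^\odot$ and an arbitrary $h\in \mathcal D(\mathcal A)$, the duality pairing $\langle \mathcal A h,f\rangle$ by an integration by parts in the age variable, and to identify the boundary terms using the conditions built into $\mathcal D^\odot$ and $\mathcal D(\mathcal A)$. If this procedure yields
\[
\langle \mathcal A h,f\rangle = \langle h,\, \tfrac{\partial f}{\partial a}+H^*\tilde f\rangle
\qquad\text{for every }h\in \mathcal D(\mathcal A),
\]
with the right-hand side a function in $E^*=L^\infty(X)$, then by definition $f$ lies in $\mathcal D(\mathcal A^*)$ and its image is given by \eqref{eigenvec-A}.

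First I would fix $f\in\mathcal D^\odot$. Since $q$ is continuous on the compact set $X$ and $f$ is continuous with $\tilde f=f(\cdot,0)\in L^\infty[\dxb,\gxb]$, the function $H^*\tilde f(x_b,a)=2q(x_b,a)f(S_a(x_b))$ lies in $L^\infty(X)$, as does $\partial f/\partial a$. So the candidate image lies in $E^*$, which is the content one needs to verify for $f$ to belong to $\mathcal D(\mathcal A^*)$. Next, for $h\in\mathcal D(\mathcal A)$ I would write
\[
\langle \mathcal A h,f\rangle = -\int_{\dxb}^{\gxb}\!\!\int_0^{\ga(x_b)}\frac{\partial h}{\partial a}(x_b,a)f(x_b,a)\,da\,dx_b
\]
and integrate by parts in $a$ for each fixed $x_b$. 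The interior term produces $\int h\,\partial f/\partial a\,da\,dx_b=\langle h,\partial f/\partial a\rangle$. The boundary contribution at $a=\ga(x_b)$ vanishes because $f(x_b,\ga(x_b))=0$ by the definition of $\mathcal D^\odot$; the contribution at $a=0$ is $\int_{\dxb}^{\gxb}h(x_b,0)f(x_b,0)\,dx_b=\int_{\dxb}^{\gxb}\mathcal P h(x_b)\tilde f(x_b)\,dx_b$, using the boundary condition $\mathcal T h=\mathcal P h$ that defines $\mathcal D(\mathcal A)$.

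I would then recognize the last integral as $\langle H h,\tilde f\rangle_{L^1[\dxb,\gxb],L^\infty[\dxb,\gxb]}=\langle h,H^*\tilde f\rangle$, using the explicit formula \eqref{H*} for $H^*$ (which has already been derived from the change of variables underlying $P_a$). Combining the two pieces yields precisely $\langle \mathcal A h,f\rangle=\langle h,\partial f/\partial a+H^*\tilde f\rangle$ for every $h\in\mathcal D(\mathcal A)$, proving $f\in\mathcal D(\mathcal A^*)$ and giving \eqref{eigenvec-A}.

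The one genuine technical point is justifying the integration by parts when $h$ is only in $W_1(X)$ and not smooth, and when $X$ has the curvilinear upper boundary $a=\ga(x_b)$. I would handle this by approximating $h$ in $W_1(X)$-norm by functions in $W_1(X)\cap C(X)$ (the density used to construct the trace $\mathcal T$), for which the integration by parts is classical via Fubini in $a$; the trace theorem then guarantees that the boundary term at $a=0$ passes to the limit as $\int \mathcal Th\cdot \tilde f\,dx_b$, while the boundary term at $a=\ga(x_b)$ vanishes in the limit because $f$ vanishes there and $f,\partial f/\partial a$ are bounded continuous. Both sides of the identity $\langle \mathcal A h,f\rangle=\langle h,\partial f/\partial a+H^*\tilde f\rangle$ are continuous in $h\in\mathcal D(\mathcal A)$ with respect to the graph norm, so the identity extends to all of $\mathcal D(\mathcal A)$, completing the proof.
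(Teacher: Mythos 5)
Your proposal is correct and follows essentially the same route as the paper: integrate by parts in $a$ against a test function $\varphi\in\mathcal D(\mathcal A)$, use $f(x_b,\ga(x_b))=0$ to kill the upper boundary term, use the boundary condition $\mathcal T\varphi=\mathcal P\varphi$ at $a=0$, and identify the resulting integral as $\langle H^*\tilde f,\varphi\rangle$. The extra care you take with the density of $W_1(X)\cap C(X)$ and the trace operator to justify the integration by parts is a welcome elaboration of a step the paper treats implicitly.
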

\begin{proof}
If $f\in \mathcal D^{\odot}$ and $\varphi \in \mathcal D(\mathcal A)$ then
\[
\begin{aligned}
\langle f,\mathcal A\varphi\rangle
&=-\int_{\dxb}^{\gxb}\int_0^{\ga(x_b)} f(x_b,a)\frac{\partial \varphi}{\partial a}(x_b,a)\,da\,dx_b\\
&=\int_{\dxb}^{\gxb} f(x_b,0)\varphi(x_b,0)\,dx_b+
\Big\langle \frac{\partial f}{\partial a},\varphi\Big\rangle
\\
&=\int_{\dxb}^{\gxb} f(x_b,0)(H\varphi)(x_b)\,dx_b+
\Big\langle \frac{\partial f}{\partial a},\varphi\Big\rangle
\\
&=\langle H^*\tilde f,\varphi\rangle+
\Big\langle \frac{\partial f}{\partial a},\varphi\Big\rangle= \Big\langle \frac{\partial f}{\partial a}+ H^*\tilde f,\varphi\Big\rangle.
\end{aligned}
\]
Thus $\mathcal D^{\odot}\subset \mathcal D(\mathcal A^*)$ and \eqref{eigenvec-A} holds.
\end{proof}

Now we will check that the operator $\mathcal A^*$ has a positive eigenvector $v\in \mathcal D^{\odot}$ for some eigenvalue $\lambda >0$. Let $\mathcal A^*v=\lambda v$. 
From (\ref{eigenvec-A}) it follows that
\begin{equation}
\label{H*2}
\lambda v -\frac{\partial v}{\partial a}=H^*{\tilde v}.
\end{equation}
The problem is that the adjoint semigroup $\{T^*(t)\}_{t\ge 0}$ is not continuous.
Instead of this semigroup we may use 
the sun dual semigroup, but it will be more convenient to consider a little modification of the sun dual semigroup.
Consider a semigroup $\{T^{\odot}(t)\}_{t\ge 0}$ 
defined on the space 
\[
\widetilde C(X)=\{f\in C(X)\colon\,\,\, f(x_b,\ga(x_b))=0\} 
\]
with the infinitesimal generator $\mathcal A^{\odot}$ 
with the domain 
$\mathcal D(\mathcal A^{\odot})=D^{\odot}$
and given by the same formula as $\mathcal A^*$.  
Let $\mathcal A^{\odot}_0f=\frac{\partial f}{\partial a}$ and  
$\mathcal D(\mathcal A^{\odot}_0)=\mathcal D(\mathcal A^{\odot})$.
Then $\mathcal A_0^{\odot}$ is 
the infinitesimal generator of a $C_0$-semigroup
$\{T^{\odot}_0(t)\}_{t\ge 0}$ 
on the space  $\widetilde C(X)$
given by the formula
\[
 T^{\odot}_0(t)f(x_b,a)=\begin{cases} 
f(x_b,a+t)&\textrm{for $a\le \ga(x_b)-t$},\\
0&\textrm{for $a> \ga(x_b)-t$}.
\end{cases}
\]
Let $H^{\odot}\colon C[\dxb,\gxb]\to \widetilde C(X)$ be given by
$H^{\odot}f(x_b,a)=2q(x_b,a)f(S_a(x_b))$.
Then  $\mathcal A^{\odot}f=\mathcal A^{\odot}_0f+H^{\odot}\tilde f$ for $f\in  \mathcal D(\mathcal A^{\odot})$.
Denote by $R(\lambda,\mathcal A^{\odot}_0)$ the resolvent of the operator $\mathcal A^{\odot}_0$.

\begin{lemma}
\label{K-lambda}
Let $K_{\lambda}\colon C[\dxb,\gxb]\to C[\dxb,\gxb]$, $\lambda\ge 0$, be the integral operator given by
\begin{equation}
\label{eigenvec-A4}
K_{\lambda}\tilde v(x_b)=\int_{\da(x_b)}^{\ga(x_b)}2e^{-\lambda a}q(x_b,a)\tilde v(S_a(x_b))\,da.
\end{equation}
If $\tilde v$ is a function such that $K_{\lambda}\tilde v=\tilde v$, then 
the function
\begin{equation}
\label{eigenvec-A2}
v(x_b,a)=\int_a^{\infty}H^{\odot}\tilde v(x_b,s)e^{-\lambda(s-a)}\,ds.
\end{equation}
satisfies {\rm(\ref{H*2})} and $v\in \mathcal D(\mathcal A^{\odot})$. 
\end{lemma}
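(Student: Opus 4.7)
The plan is to verify the two assertions by direct computation: first that $v$ defined by \eqref{eigenvec-A2} satisfies the eigenequation \eqref{H*2}, and second that $v\in\mathcal D(\mathcal A^{\odot})=\mathcal D^{\odot}$, which amounts to continuity of $v$ and of $\partial v/\partial a$, the two boundary vanishings on $\{a=\ga(x_b)\}$, and the consistency $v(x_b,0)=\tilde v(x_b)$ (which is what justifies reading $\tilde v$ as the trace of $v$). Only this last consistency step uses the fixed-point identity $K_\lambda\tilde v=\tilde v$; all the others are structural.

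For the eigenequation, I would rewrite \eqref{eigenvec-A2} as
\[
v(x_b,a)=e^{\lambda a}\int_a^{\infty}H^{\odot}\tilde v(x_b,s)e^{-\lambda s}\,ds,
\]
observe that $H^{\odot}\tilde v(x_b,s)=2q(x_b,s)\tilde v(S_s(x_b))$ is continuous in $s$ and compactly supported by (A2)--(A3), and differentiate in $a$ under the integral to obtain $\partial v/\partial a=\lambda v-H^{\odot}\tilde v$, which is precisely \eqref{H*2}.

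Next I would verify membership in $\mathcal D^{\odot}$. Writing
\[
v(x_b,a)=\int_a^{\ga(x_b)}2q(x_b,s)\tilde v(S_s(x_b))e^{-\lambda(s-a)}\,ds,
\]
the integrand is jointly continuous in $(x_b,s)$ by (A2) and by smooth dependence of the flow $\pi_s x_b$ on its data, and the upper limit is continuous in $x_b$ by (A4); hence $v\in C(X)$, and continuity of $\partial v/\partial a$ follows at once from the eigenequation. Note that (A5) together with monotonicity of $a\mapsto S_a(x_b)$ keeps $S_s(x_b)\in[\dxb,\gxb]$ wherever $q(x_b,s)\neq 0$, so $\tilde v\circ S_s$ is well defined. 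For the boundary vanishings at $a=\ga(x_b)$: since $q(x_b,s)=0$ for $s\ge\ga(x_b)$ by (A3), the integral defining $v$ shrinks to zero at $a=\ga(x_b)$, and the eigenequation then gives $(\partial v/\partial a)(x_b,\ga(x_b))=\lambda\cdot 0-2q(x_b,\ga(x_b))\tilde v(S_{\ga(x_b)}(x_b))=0$.

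Finally, the consistency $v(x_b,0)=\tilde v(x_b)$ follows by collapsing the support of $q$:
\begin{align*}
v(x_b,0)&=\int_0^{\infty}2q(x_b,s)\tilde v(S_s(x_b))e^{-\lambda s}\,ds\\
&=\int_{\da(x_b)}^{\ga(x_b)}2e^{-\lambda s}q(x_b,s)\tilde v(S_s(x_b))\,ds=K_{\lambda}\tilde v(x_b),
\end{align*}
which equals $\tilde v(x_b)$ by hypothesis. I do not anticipate any genuine obstacle; the only slightly delicate check is the continuous matching at the upper boundary $\{a=\ga(x_b)\}$ of $X$, but this is handled automatically by the vanishing of $q$ there guaranteed by (A3).
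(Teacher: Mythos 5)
Your proposal is correct and follows essentially the same route as the paper: both reduce everything to the observation that evaluating \eqref{eigenvec-A2} at $a=0$ turns the eigenequation into the fixed-point equation $K_\lambda\tilde v=\tilde v$ for the trace $\tilde v=v(\cdot,0)$. The only cosmetic difference is that the paper obtains $v\in\mathcal D(\mathcal A^{\odot})$ for free by noting that \eqref{eigenvec-A2} exhibits $v$ as $R(\lambda,\mathcal A^{\odot}_0)H^{\odot}\tilde v$, i.e.\ an element of the range of the resolvent of $\mathcal A^{\odot}_0$, whereas you verify the defining conditions of $\mathcal D^{\odot}$ (continuity, boundary vanishing, and the consistency $v(x_b,0)=\tilde v(x_b)$) by direct computation, which is equally valid and arguably more self-contained.
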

\begin{proof}
If $v\in \widetilde C(X)$ satisfies the equation 
\begin{equation}
\label{H*3}
v=R(\lambda,\mathcal A^{\odot}_0)H^{\odot}{\tilde v}
\end{equation}
then $v$ also satisfies (\ref{H*2}).
Since $R(\lambda,\mathcal A^{\odot}_0)f=\int_0^{\infty} e^{-\lambda s}T^{\odot}_0(s)f\,ds$
we have 
\[
R(\lambda,\mathcal A^{\odot}_0)f(x_b,a)
=\int_a^{\infty}f(x_b,s)e^{-\lambda(s-a)}\,ds.
\]
Now (\ref{H*3}) can be written as the integral equation \eqref{eigenvec-A2}.
Observe that in order to find $v(x_b,a)$ it is enough to solve (\ref{eigenvec-A2}) for $a=0$.  
Equation (\ref{eigenvec-A2}) for $a=0$ takes the form 
\begin{equation*}
v(x_b,0)=\int_0^{\infty}H^{\odot}\tilde v(x_b,s)
e^{-\lambda s}\,ds.
\end{equation*}
In the above formula we replace $s$ by $a$ and apply (\ref{H*}). Then
\[
v(x_b,0)=\int_{\da(x_b)}^{\ga(x_b)}2e^{-\lambda a}q(x_b,a)v(S_a(x_b),0)\,da.
\]
Thus if $K_{\lambda}\tilde v=\tilde v$, then $v$ given by \eqref{eigenvec-A2}
satisfies \eqref{H*2}. Since $v$ belongs to the range of resolvent $R(\lambda,\mathcal A^{\odot}_0)$, 
we have    
 $v\in \mathcal D(\mathcal A^{\odot}_0)=\mathcal D(\mathcal A^{\odot})$. 
\end{proof}

We want to prove that there exists a constant $\lambda>0$ and a positive function  $\tilde v\in C[\dxb,\gxb]$  such that 
$K_{\lambda}\tilde v=\tilde v$. 
We split the proof of this fact into two lemmae.

\begin{lemma}
\label{lemma-eigenfunction1}
For each $\lambda\ge 0$ the spectral radius $r(K_{\lambda})$ of $K_{\lambda}$ 
is a positive, isolated and simple eigenvalue of $K_{\lambda}$ associated with a positive eigenfunction $\tilde v_{\lambda}\in C[\dxb,\gxb]$.
\end{lemma}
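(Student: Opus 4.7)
The plan is to identify $K_\lambda$ as a positive compact operator on the Banach lattice $C[\dxb,\gxb]$ and then invoke the Krein--Rutman theorem together with an irreducibility argument built from assumption (A6).

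First I would verify that $K_\lambda$ is a bounded positive linear operator on $C[\dxb,\gxb]$: continuity of $q$, $\da$, $\ga$, and of the map $(x_b,a)\mapsto S_a(x_b)=\tfrac12\pi_a x_b$ (via (A1)) makes $K_\lambda\tilde v$ continuous in $x_b$ for every continuous $\tilde v$; positivity is immediate from $q\ge 0$. Next I would show that $K_\lambda$ is compact. If $\|\tilde v\|_\infty\le 1$, then the integrand in \eqref{eigenvec-A4} is uniformly bounded, and by the continuity of $q$, $\da$, $\ga$ and of $x_b\mapsto S_a(x_b)$ on the compact set $X$, small variations of $x_b$ produce small variations of the integral uniformly in $\tilde v$. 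Hence $K_\lambda(\overline B_1)$ is equicontinuous and bounded, so relatively compact by Arzel\`a--Ascoli.

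Next I would verify that $r(K_\lambda)>0$. Since $q(x_b,a)>0$ on the open set $\{(x_b,a):\da(x_b)<a<\ga(x_b)\}$ and $\int_{\da(x_b)}^{\ga(x_b)}q(x_b,a)\,da=1$ by (A2), one can find a constant $c>0$ and an interval $[\alpha,\beta]\subset(\dxb,\gxb)$ on which the kernel of $K_\lambda$ is uniformly bounded below; integrating $K_\lambda$ against the characteristic function of $[\alpha,\beta]$ and iterating gives a positive lower bound on $\|K_\lambda^n\|^{1/n}$, so $r(K_\lambda)>0$. By the classical Krein--Rutman theorem applied to the compact positive operator $K_\lambda$, the spectral radius $r(K_\lambda)$ is an eigenvalue with a nonzero nonnegative eigenfunction $\tilde v_\lambda\in C[\dxb,\gxb]$.

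It remains to upgrade this to a strictly positive, algebraically simple, isolated eigenvalue. The plan is to prove that $K_\lambda$ is \emph{irreducible}: for every nonzero $\tilde v\ge 0$ and every $x_b^*\in(\dxb,\gxb)$ there exists $n$ with $K_\lambda^n\tilde v(x_b^*)>0$. This is where (A6) is essential: for each interior $x_b$, the set $\{S_a(x_b):a\in(\da(x_b),\ga(x_b))\}$ is an interval of positive length containing $x_b$ in its interior, so one step of $K_\lambda$ ``spreads'' support across a neighborhood of $x_b$, and a compactness/chaining argument on $[\dxb,\gxb]$ shows that finitely many iterations carry any support set onto the whole interval. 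Together with positivity of $q$ this shows strict positivity of $K_\lambda^n\tilde v$ on compact subsets of $(\dxb,\gxb)$. Applied to $\tilde v=\tilde v_\lambda$, this gives $\tilde v_\lambda>0$; combined with compactness, the de Pagter/Sawashima theorem (every nonzero irreducible compact positive operator on a Banach lattice has a positive spectral radius which is a simple, algebraically simple, and isolated eigenvalue with a quasi-interior eigenvector) yields simplicity and isolatedness of $r(K_\lambda)$.

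The main obstacle I expect is the irreducibility step: one must exploit (A6) quantitatively to make the ``spreading'' argument rigorous, since a priori (A6) only gives strict inequality at each individual interior point, and one has to propagate this to a uniform statement about reaching arbitrary interior points in a bounded number of iterations over any compact subinterval of $(\dxb,\gxb)$.
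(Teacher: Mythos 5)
Your proposal is correct and follows essentially the same route as the paper: both arguments rest on compactness of $K_\lambda$ and on the fact that (A6) makes the operator ``positive on the diagonal'' (your observation that $\{S_a(x_b)\}$ is an interval containing $x_b$ in its interior is exactly the statement $k_\lambda(x_b,x_b)>0$ for the kernel the paper obtains after the substitution $y=S_a(x_b)$), which yields irreducibility and hence the Perron--Frobenius-type conclusion. The only difference is packaging: the paper changes variables to a literal integral kernel and cites Anselone--Lee for kernels positive on the diagonal, whereas you unpack that citation into an explicit Arzel\`a--Ascoli, Krein--Rutman and de Pagter/Sawashima argument.
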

\begin{proof}
In order to check this property we write the operator $K_{\lambda}$ in the standard integral form.
We substitute $y(a)=S_a(x_b)$ in (\ref{eigenvec-A4}). Then $da=2\,dy/g(2y)$ and we find that the
operator $K_{\lambda}$ can be written in the form
\[
K_{\lambda}\tilde v(x_b)= \int_{\dxb}^{\gxb} k_{\lambda}(x_b,y)\tilde v(y)\,dy,\quad 
k_{\lambda}(x_b,y)=\frac{4e^{-\lambda a(y;x_b)}q(x_b,a(y;x_b))}{g(2y)},
\]
where
\[
a(y;x_b)=\int_{x_b}^{2y}\frac{dr}{g(r)}.
\]
The expression $a(y;x_b)$ has the following interpretation. If $x_b$ is the initial size of a mother cell and it splits at the age 
$a(y;x_b)$,  then $y$ is the initial size of its daughter cells.
Since the function $g$ is continuous and positive, the kernel $k_{\lambda}$  is continuous and nonnegative.
Moreover, $k_{\lambda}(x_b,x_b)>0$  for  all $x_b\in (\dxb,\gxb)$.
Indeed, $k_{\lambda}(x_b,x_b)>0$ if and only if $q(x_b,a(x_b;x_b))>0$.  The last inequality follows from (A6)
  (see Fig.~\ref{r:cell-cyc2}).
This implies that the
spectral radius $r(K_{\lambda})$ is a  positive, isolated and simple eigenvalue of $K_{\lambda}$ associated with an  eigenfunction $\tilde v_{\lambda}\in C[\dxb,\gxb]$
such that $\tilde v_{\lambda}(x_b)>0$ for $x_b\in (\dxb,\gxb)$ (see comments after the proof of Theorem 7.4 of \cite{AL}). 
Observe that $\tilde v_{\lambda}$ is also positive at $\dxb$ and $\gxb$, because $\tilde v_{\lambda}(y)>0$ for $y\in(\dxb,\gxb)$
and the functions $y\mapsto q(\dxb,a(y;\dxb))$ and $y\mapsto q(\gxb,a(y;\gxb))$ are positive on some nontrivial intervals. 
\end{proof}

\begin{lemma}
\label{lemma-eigenfunction2}
There exists $\lambda>0$  such that $r(K_{\lambda})=1$.
\end{lemma}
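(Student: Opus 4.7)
The plan is to regard $\lambda\mapsto r(K_\lambda)$ as a continuous function on $[0,\infty)$ and invoke the intermediate value theorem, after checking that its value at $\lambda=0$ strictly exceeds $1$ and that it tends to $0$ as $\lambda\to\infty$.

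First I would verify continuity. The kernel $k_\lambda(x_b,y)$ is jointly continuous in $(\lambda,x_b,y)$ on $[0,\infty)\times[\dxb,\gxb]^2$ and the domain $[\dxb,\gxb]$ is compact, so $\lambda\mapsto K_\lambda$ is norm-continuous as a family of bounded operators on $C[\dxb,\gxb]$. Combined with Lemma~\ref{lemma-eigenfunction1}, which states that $r(K_\lambda)$ is a positive, isolated, simple eigenvalue of $K_\lambda$, the standard perturbation theory for isolated simple eigenvalues (the holomorphic functional calculus applied to a small circle separating $r(K_\lambda)$ from the rest of the spectrum) guarantees that $\lambda\mapsto r(K_\lambda)$ is continuous, and in fact strictly decreasing since $k_\lambda$ is strictly decreasing in $\lambda$ pointwise on its support.

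Next I would evaluate the two endpoints. At $\lambda=0$, taking the constant test function $\tilde v\equiv 1$,
\[
K_0\mathbf{1}(x_b)=\int_{\da(x_b)}^{\ga(x_b)}2q(x_b,a)\,da=2,
\]
since $q(x_b,\cdot)$ is a probability density supported on $[\da(x_b),\ga(x_b)]$. By the Collatz--Wielandt characterization of the spectral radius of a positive operator with a positive eigenfunction,
\[
r(K_0)\ge\inf_{x_b\in[\dxb,\gxb]}\frac{K_0\mathbf{1}(x_b)}{\mathbf{1}(x_b)}=2.
\]
For the opposite endpoint, set $\da=\min_{x_b\in[\dxb,\gxb]}\da(x_b)$, which is positive by (A3)--(A4) and compactness. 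Then for every $\tilde v\in C[\dxb,\gxb]$,
\[
\|K_\lambda\tilde v\|_\infty\le 2\|\tilde v\|_\infty\sup_{x_b}\int_{\da(x_b)}^{\ga(x_b)}e^{-\lambda a}q(x_b,a)\,da\le 2e^{-\lambda\da}\|\tilde v\|_\infty,
\]
so $r(K_\lambda)\le\|K_\lambda\|\le 2e^{-\lambda\da}\to 0$ as $\lambda\to\infty$.

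Since the continuous function $\lambda\mapsto r(K_\lambda)$ takes a value $\ge 2>1$ at $\lambda=0$ and tends to $0$ at $+\infty$, the intermediate value theorem yields some $\lambda>0$ with $r(K_\lambda)=1$. The only step requiring care is the continuity (and, for uniqueness of $\lambda$, strict monotonicity) of $r(K_\lambda)$; the two endpoint estimates are direct consequences of $q(x_b,\cdot)$ being a probability density and of the uniform lower bound $\da>0$.
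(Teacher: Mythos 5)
Your proof is correct and follows essentially the same route as the paper: norm-continuity of $\lambda\mapsto K_\lambda$, continuity of $r(K_\lambda)$, the endpoint facts $r(K_0)=2$ (via $K_0\mathbf 1=2\cdot\mathbf 1$) and $r(K_\lambda)\le\|K_\lambda\|\le 2e^{-\lambda\da}\to 0$, and the intermediate value theorem. The only divergence is how continuity of $r(K_\lambda)$ is justified: the paper uses compactness of $K_\lambda$ plus the continuity of the spectral radius on compact operators (citing Degla), whereas you perturb the isolated simple eigenvalue from Lemma~\ref{lemma-eigenfunction1}; this also works, provided you add the standard upper-semicontinuity-of-the-spectrum remark to identify the continued eigenvalue with the spectral radius of the perturbed operator.
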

\begin{proof}
First we check that the function $\lambda\mapsto K_{\lambda}$ is 
continuous  with respect to the operator norm.
Indeed, let $\lambda_1\le \lambda_2$ and $T_{\lambda_1,\lambda_2}=K_{\lambda_1}-K_{\lambda_2}$.
Since $\int q(x_b,a)\,da=1$ and $e^{-\lambda_1 a}-e^{-\lambda_2 a}\le \ga(\lambda_2-\lambda_1)$, we have
\[
\|T_{\lambda_1,\lambda_2}\|\le \max\limits_{\dxb\le x_b\le\gxb} \int_{\da(x_b)}^{\ga(x_b)}2\big( e^{-\lambda_1 a}-e^{-\lambda_2 a}\big) q(x_b,a)\,da\le 
2\ga(\lambda_2-\lambda_1).
\] 
Since the function  $k_{\lambda}$ is continuous, the operator
$K_{\lambda}\colon C[\dxb,\gxb]\to C[\dxb,\gxb]$ is  compact for each $\lambda\ge 0$.
The spectral radius mapping restricted to
compact linear bounded operators on any Banach space
is continuous with respect to the
operator norm (see e.g. Theorem~2.1 of \cite{Degla}).
Thus the function
$\lambda\mapsto r(K_{\lambda})$ is continuous.
Observe that, $r(K_0)=2$, because $\|K_0\|=2$ and $K_0\mathbf 1_{[\dxb,\gxb]}=2\cdot \mathbf 1_{[\dxb,\gxb]}$.
Now, let $\bar \lambda>0$ be a constant such that $e^{-\bar\lambda \da}\le 1/4$. Then 
\[
K_{\bar\lambda} \tilde v(x_b)\le \frac12 \int_{\da(x_b)}^{\ga(x_b)}q(x_b,a)\tilde v(S_a(x_b))\,da\le \frac12\|\tilde v\|
\quad \textrm{for $\tilde v\ge 0$,}
\]
and consequently $r(K_{\bar\lambda}) \le \|K_{\bar\lambda}\|\le \frac12$. From the continuity of the function 
$\lambda\mapsto r(K_{\lambda})$ it follows that there exists a $\lambda\in (0,\bar\lambda)$ such that 
$r(K_{\lambda})=1$. 
\end{proof}
Now we apply formula (\ref{eigenvec-A2}) to find a nonnegative eigenfunction $v(x_b,a)$ of the operator $\mathcal A^*$.

\begin{proposition}
\label{prop-eig1}
The operator $\mathcal A^*$ has an eigenvalue $\lambda>0$ 
and a corresponding eigenfunction $v$ such that 
\begin{equation}
\label{eigenfun-A*}
c_1\Phi(x_b,a)\le  v(x_b,a)\le c_2 \Phi(x_b,a) 
\end{equation}
for some positive constants $c_1$ and $c_2$ independent of $x_b$ and $a$.
\end{proposition}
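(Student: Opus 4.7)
The plan is to assemble the proposition from Lemmas~\ref{K-lambda}, \ref{lemma-eigenfunction1} and~\ref{lemma-eigenfunction2}, and then to read off the two-sided bound directly from the integral representation~\eqref{eigenvec-A2}.

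First I would invoke Lemma~\ref{lemma-eigenfunction2} to pick $\lambda>0$ with $r(K_\lambda)=1$. Applying Lemma~\ref{lemma-eigenfunction1} to this $\lambda$ then yields a continuous eigenfunction $\tilde v\in C[\dxb,\gxb]$ of $K_\lambda$ at the eigenvalue~$1$; as observed at the end of that proof, $\tilde v$ is strictly positive on the entire closed interval $[\dxb,\gxb]$, endpoints included. Feeding this $\tilde v$ into Lemma~\ref{K-lambda} produces the function $v$ defined by~\eqref{eigenvec-A2}; it lies in $\mathcal D(\mathcal A^{\odot})=\mathcal D^{\odot}\subset \mathcal D(\mathcal A^*)$ by Lemma~\ref{eigA*} and satisfies~\eqref{H*2}, which is precisely $\mathcal A^* v=\lambda v$.

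For the two-sided estimate I would exploit compactness. Since $\tilde v$ is continuous and strictly positive on the compact interval $[\dxb,\gxb]$, the constants $m:=\min\tilde v$ and $M:=\max\tilde v$ satisfy $0<m\le M<\infty$. By assumption~(A5) the point $S_s(x_b)$ lies in $[\dxb,\gxb]$ whenever $q(x_b,s)\ne 0$, so $m\le \tilde v(S_s(x_b))\le M$ on the effective support of the integrand in~\eqref{eigenvec-A2}. On that support $0\le s-a\le \ga$, because $s\le\ga(x_b)\le\ga$ and $a\ge 0$, whence $e^{-\lambda\ga}\le e^{-\lambda(s-a)}\le 1$. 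Substituting into
\[
v(x_b,a)=\int_a^{\infty}2q(x_b,s)\tilde v(S_s(x_b))e^{-\lambda(s-a)}\,ds
\]
and pulling out the bounds on $\tilde v$ and on the exponential yields
\[
2m\,e^{-\lambda\ga}\,\Phi(x_b,a)\;\le\; v(x_b,a)\;\le\; 2M\,\Phi(x_b,a),
\]
which is \eqref{eigenfun-A*} with $c_1=2me^{-\lambda\ga}$ and $c_2=2M$.

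The only delicate point is ensuring that $m$ is strictly positive, i.e.\ that $\tilde v$ does not vanish at the endpoints $\dxb$ or $\gxb$; this is exactly the last observation in the proof of Lemma~\ref{lemma-eigenfunction1}, which uses that the maps $y\mapsto q(\dxb,a(y;\dxb))$ and $y\mapsto q(\gxb,a(y;\gxb))$ are positive on nontrivial intervals. Once that is in hand, everything reduces to straightforward bookkeeping on formula~\eqref{eigenvec-A2} and assumption~(A5).
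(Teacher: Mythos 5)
Your proposal is correct and follows essentially the same route as the paper: choose $\lambda$ with $r(K_\lambda)=1$ via Lemma~\ref{lemma-eigenfunction2}, take the positive fixed point $\tilde v$ of $K_\lambda$ from Lemma~\ref{lemma-eigenfunction1}, build $v$ by formula~\eqref{eigenvec-A2}, and bound the factors $\tilde v(S_s(x_b))$ and $e^{-\lambda(s-a)}$ above and away from zero to get~\eqref{eigenfun-A*}. Your explicit constants $c_1=2me^{-\lambda\ga}$, $c_2=2M$ and the remark that positivity of $\tilde v$ at the endpoints is the one point needing care merely make explicit what the paper leaves implicit.
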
 

\begin{proof}
According to Lemma~\ref{lemma-eigenfunction2} there exists $\lambda>0$  such that $r(K_{\lambda})=1$.
Let $\tilde v_{\lambda}$ be a positive fixed point of $K_{\lambda}$. Then from formulae (\ref{H*}) and (\ref{eigenvec-A2})
it follows that 
\[
v(x_b,a)=\int_a^{\infty}
2q(x_b,s)\tilde v_{\lambda}(S_s(x_b))e^{-\lambda(s-a)}\,ds
\]
is the eigenfunction of the operator $\mathcal A^*$ corresponding to $\lambda$. Since the functions $\tilde v_{\lambda}(S_s(x_b))$ 
and $e^{-\lambda(s-a)}$ are bounded above and bounded away from zero, 
there exist positive constants $c_1$ and $c_2$ such that
\[
c_1\int_a^{\infty}   q(x_b,s)\,ds  \le  v(x_b,a)\le c_2\int_a^{\infty}   q(x_b,s)\,ds 
\]
and (\ref{eigenfun-A*}) follows from the definition of $\Phi$.
\end{proof}

From now on $\lambda$ denotes  the eigenvalue from Proposition~\ref{prop-eig1}. 
\begin{lemma}
\label{l:opJ}
A function $f_i(x_b,a)$ is an eigenvector of $\mathcal A$ corresponding to $\lambda$
if and only if 
\begin{equation}
\label{A-Eigenv1}
f_i(x_b,a)=e^{-\lambda a}f_i(x_b,0)\quad\textrm{for}\quad a\le\ga(x_b)
\end{equation}
and the function $f(x_b)=f_i(x_b,0)$ satisfies the equation $Jf=f$, where the operator $J\colon L^1[\dxb,\gxb]\to L^1[\dxb,\gxb]$ is given by the formula
\begin{equation}
\label{J-eig1}
Jf(x_b)=\int_{\dxb}^{2x_b}2e^{-\lambda a(x_b;y)}q(y,a(x_b;y))f(y)\,dy.
\end{equation}
\end{lemma}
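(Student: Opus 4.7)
The plan is to prove the equivalence by separation of variables followed by a change of integration variable that identifies the boundary operator $\mathcal P$ with the integral operator $J$. For the forward direction, suppose $\mathcal A f_i = \lambda f_i$. Since $f_i \in \mathcal D(\mathcal A) \subset W_1(X)$, the identity $-\partial f_i/\partial a = \lambda f_i$ holds in $L^1$. Fixing $x_b$ and treating this as a scalar ODE in $a$, integration yields $f_i(x_b, a) = e^{-\lambda a} f_i(x_b, 0)$, which is \eqref{A-Eigenv1}. Setting $f(x_b) := \mathcal T f_i(x_b) = f_i(x_b, 0)$, the domain condition $\mathcal T f_i = \mathcal P f_i$ reads
\[
f(x_b) = 2 \int_0^\infty \bigl(P_a(q(\cdot, a) e^{-\lambda a} f(\cdot))\bigr)(x_b) \, da.
\]

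To turn this into $Jf = f$, I would insert the explicit formula \eqref{F-P7} for $P_a$; the integrand then depends on $a$ only through $y := \pi_{-a}(2x_b)$, which is the initial size of a mother cell producing a daughter of initial size $x_b$ at split age $a$. The natural substitution $a \mapsto y$ uses $da = -dy/g(y)$, since $\partial \pi_{-a}(2x_b)/\partial a = -g(\pi_{-a}(2x_b))$; it sends the integration range to $y \in [\dxb, 2x_b]$, further restricted by the support of $q$ via (A3) and (A5), and introduces the inverse relation $a = a(x_b; y) = \int_y^{2x_b} dr/g(r)$ familiar from the analysis of $k_\lambda$ in Section~\ref{s:eigen}. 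The $g$-factor of \eqref{F-P7} combines with the Jacobian so that the right-hand side collapses to $Jf(x_b)$ as defined by \eqref{J-eig1}.

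For the converse, given $f$ with $Jf = f$, define $f_i(x_b, a) := e^{-\lambda a} f(x_b)$. Then $f_i \in W_1(X)$, a direct computation gives $\mathcal A f_i = -\partial f_i/\partial a = \lambda f_i$, and reversing the same change of variables yields $\mathcal P f_i(x_b) = Jf(x_b) = f(x_b) = f_i(x_b, 0) = \mathcal T f_i(x_b)$, confirming $f_i \in \mathcal D(\mathcal A)$. The only nontrivial step throughout is the change of variables: one must track the $g$-factors from \eqref{F-P7} against the Jacobian and pin down the effective integration interval using (A3) and (A5). This is essentially the same bookkeeping already carried out when deriving $k_\lambda$ just after Lemma~\ref{K-lambda}, so no new technical obstacle should arise.
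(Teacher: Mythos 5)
Your argument is correct and follows essentially the same route as the paper: separation of variables in $a$ to obtain \eqref{A-Eigenv1}, then the substitution $y=S_a^{-1}(x_b)=\pi_{-a}(2x_b)$ in the boundary condition $\mathcal Tf_i=\mathcal Pf_i$, with the reversibility of both steps supplying the converse. One small point worth recording: carrying out the bookkeeping you describe (the factor $2g(\pi_{-a}(2x_b))/g(2x_b)$ from \eqref{F-P7} against the Jacobian $da=dy/g(y)$) actually yields the kernel $4e^{-\lambda a(x_b;y)}q(y,a(x_b;y))/g(2x_b)$ — consistent with $J$ being the adjoint of $k_\lambda$ — so the constant in \eqref{J-eig1} as printed is off by a factor $2/g(2x_b)$; this is a typo in the paper rather than a gap in your proof.
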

\begin{proof}
If a function $f_i(x_b,a)$ is an eigenvector of $\mathcal A$ corresponding to $\lambda$, then 
the function $z(t,x_b,a)=e^{\lambda t} f_i(x_b,a)$ is a solution of (\ref{eq1-n})--(\ref{eq3-n}).
Substituting   $z=e^{\lambda t} f_i$ into (\ref{eq1-n})--(\ref{eq2-n}) we obtain
\begin{equation}
\label{J-eig1a}
\lambda f_i(x_b,a)+\frac{\partial f_i}{\partial a}(x_b,a)=0,\quad
f_i(x_b,0)=\mathcal Pf_i(x_b).
\end{equation}
From the first of equations \eqref{J-eig1a} it follows that the function $(x_b,a)\mapsto e^{\lambda a} f_i(x_b,a)$ has 
zero partial derivative with respect to $a$.
Thus $e^{\lambda a} f_i(x_b,a)=f_i(x_b,0)$, where $f_i(x_b,0)$ is the value of the trace operator $\mathcal T$ on $f_i$.
Therefore  \eqref{A-Eigenv1} holds
and the function $x_b\mapsto  f_i(x_b,0)$ satisfies the following
integral equation
\begin{equation}
\label{A-Eigenv2}
f_i(x_b,0)=\int_0^{\infty}2e^{-\lambda a}\Big(P_a\big(q(\cdot,a)f_i(\cdot,0)\big)\Big)(x_b)\,da.
\end{equation}
Since
\[
\Big(P_a\big(q(\cdot,a)f(\cdot)\big)\Big)(x_b)
=\frac{d}{dx_b}\big(S_a^{-1}(x_b)\big)q\big(S_a^{-1}(x_b),a\big)f\big(S_a^{-1}(x_b)\big),
 \]
the substitution $y=S_a^{-1}(x_b)$ to \eqref{A-Eigenv2} gives \eqref{J-eig1}.
\end{proof}
From \eqref{J-eig1} it follows that $J$ is an integral operator with a continuous kernel.
In particular $Jf\in C[\dxb,\gxb]$ for $f\in L^1[\dxb,\gxb]$ and the operator 
$J$ restricted to $C[\dxb,\gxb]$ is a continuous and positive.

The operators $J$ and $K_{\lambda}$ are adjoint, i.e. 
\begin{equation*}
\int_{\dxb}^{\gxb} g(x_b)Jf(x_b)\,dx_b=\int_{\dxb}^{\gxb} K_{\lambda}g(x_b)f(x_b)\,dx_b
\end{equation*}
for $f\in L^1[\dxb,\gxb]$, $g\in L^{\infty}[\dxb,\gxb]$.

\begin{lemma}
\label{lemma-eigenfunction-J1}
There exists $\tilde f_i\in C[\dxb,\gxb]$ such that $J\tilde f_i= \tilde f_i$ and
$\tilde f_i(x_b)>0$ for $x_b\in (\dxb,\gxb)$. 
The function $\tilde f_i$ is 
the unique, up to a multiplicative constant, fixed point of  $J$.
\end{lemma}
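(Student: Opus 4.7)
I would mimic the argument of Lemma \ref{lemma-eigenfunction1}, applied now to $J$ in place of $K_{\lambda}$. By \eqref{J-eig1}, $J$ is an integral operator with kernel
\[
j(x_b, y) = 2 e^{-\lambda a(x_b; y)} q(y, a(x_b; y)),
\]
extended by zero outside the support of $q(y,\cdot)$. Since $g$, $q$ and $(x_b,y)\mapsto a(x_b;y)$ are continuous and $q$ vanishes on the boundary of its support, $j$ is continuous on $[\dxb,\gxb]^2$ and nonnegative. Hence $J$ maps $L^1[\dxb,\gxb]$ continuously into $C[\dxb,\gxb]$, and in particular $J$ is a compact positive operator on $C[\dxb,\gxb]$. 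Just as in the proof of Lemma \ref{lemma-eigenfunction1}, the diagonal values $j(x_b,x_b)$ are strictly positive for $x_b \in (\dxb,\gxb)$: this reduces to the inequality $q(x_b, a(x_b;x_b)) > 0$, which is precisely what assumption (A6) guarantees.

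Next, I would invoke the Perron--Frobenius/Krein--Rutman theorem for compact positive operators with a kernel positive on the diagonal (the same reference to Theorem~7.4 of \cite{AL} used for $K_\lambda$) to conclude that the spectral radius $r(J)$ is a positive, isolated, simple eigenvalue of $J$, with an associated eigenfunction $\tilde f_i \in C[\dxb,\gxb]$ satisfying $\tilde f_i(x_b) > 0$ for all $x_b \in (\dxb,\gxb)$. Positivity at the endpoints follows verbatim from the closing remark of Lemma \ref{lemma-eigenfunction1}: the equality $\tilde f_i(\dxb) = \int j(\dxb,y)\tilde f_i(y)\,dy$ has a strictly positive right-hand side because $y\mapsto j(\dxb,y)$ is strictly positive on a nondegenerate subinterval of $(\dxb,\gxb)$, and analogously at $\gxb$. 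The simplicity of $r(J)$ yields uniqueness of $\tilde f_i$ up to a multiplicative constant.

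It remains to identify $r(J)$ with $1$, so that $\tilde f_i$ is genuinely a fixed point of $J$. For this I would use the adjoint relation displayed immediately before the lemma: $\int g\,Jf\,dx_b = \int (K_{\lambda}g)\,f\,dx_b$ for $f\in L^1[\dxb,\gxb]$, $g\in L^{\infty}[\dxb,\gxb]$. Since both $J$ and $K_{\lambda}$ are compact, standard duality implies that their nonzero spectra coincide, and hence $r(J) = r(K_{\lambda}) = 1$ by our choice of $\lambda$ in Lemma \ref{lemma-eigenfunction2}. Therefore $J\tilde f_i = \tilde f_i$, completing the proof.

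The only point that requires a little care is the final identification $r(J)=r(K_\lambda)$ across the two function spaces $L^1[\dxb,\gxb]$ and $C[\dxb,\gxb]$; this is routine because the kernel $j$ is continuous, so $J$ acts on both spaces as a compact operator with the same nonzero eigenvalues and eigenvectors. Once this is granted, the bulk of the argument is a direct transcription of the reasoning already developed for $K_\lambda$.
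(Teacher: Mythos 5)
Your proposal is correct and follows essentially the same route as the paper: apply the Anselone--Lee argument of Lemma~\ref{lemma-eigenfunction1} to the continuous kernel of $J$ (positive on the diagonal by (A6)) to get a positive eigenfunction for the simple eigenvalue $r(J)$, then use the duality with $K_\lambda$ to conclude $r(J)=1$. The only cosmetic difference is in that last step: the paper pairs directly against the positive eigenvector $\tilde v_\lambda$, computing $\langle J\tilde f_i,\tilde v_{\lambda}\rangle=\langle \tilde f_i,K_{\lambda}\tilde v_{\lambda}\rangle=\langle \tilde f_i,\tilde v_{\lambda}\rangle$ and dividing by the nonzero pairing, whereas you invoke the general fact that adjoint compact operators share their nonzero spectrum --- both are valid.
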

\begin{proof}
Using the same arguments as in Lemma~\ref{lemma-eigenfunction1} we prove that 
there exists an eigenfunction $\tilde f_i\in C[\dxb,\gxb]$ of $J$ such that 
$\tilde f_i(x_b)>0$ for $x_b\in (\dxb,\gxb)$. This eigenfunction is indeed a fixed point of $J$  because 
$\langle J\tilde f_i,\tilde v_{\lambda}\rangle=\langle \tilde f_i,K_{\lambda}\tilde v_{\lambda}\rangle =\langle \tilde f_i,\tilde v_{\lambda}\rangle$.
Since $r=1$ is an isolated and simple eigenvalue of $J$, the function $\tilde f_i$ is 
the unique, up to a multiplicative constant, fixed point of~$J$.
\end{proof}

\begin{remark}
\label{positivivty-boundary}
It is  generally not true  that  $\tilde f_i(\dxb)>0$ and $\tilde f_i(\gxb)>0$.
If we assume additionally that $S_{\da(x_b)}(x_b)=\dxb$ for $x_b\in [\dxb,\dxb+\delta]$, $\delta>0$, then 
$\tilde f_i(\dxb)>0$ because a mother cell with the initial size $x_b\in [\dxb,\dxb+\delta]$ can have a daughter cell with the initial size $\dxb$.
Analogously,  if $S_{\ga(x_b)}(x_b)=\gxb$ for 
$x_b\in [\gxb-\delta,\gxb]$, $\delta>0$, then $\tilde f_i(\gxb)>0$.
\end{remark}

From Lemma~\ref{lemma-eigenfunction-J1} and from formulae (\ref{A-Eigenv1}) and (\ref{A-Eigenv2}) we have 
\begin{proposition}
\label{prop-eig2}
Let $\tilde f_i$ be the function from Lemma~$\ref{lemma-eigenfunction-J1}$. If $f_i(x_b,a)=e^{-\lambda a}\tilde f_i(x_b)$, then
$\mathcal Af_i=\lambda f_i$. The function $f_i$ is 
the unique, up to a multiplicative constant, eigenfunction of  $\mathcal A$ corresponding to the eigenvalue $\lambda$.

\end{proposition}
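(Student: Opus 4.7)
The plan is to read off both assertions from the correspondence established in Lemma~\ref{l:opJ} combined with the existence/uniqueness result of Lemma~\ref{lemma-eigenfunction-J1}. Lemma~\ref{l:opJ} sets up a bijection between eigenvectors of $\mathcal A$ at eigenvalue $\lambda$ and fixed points of the integral operator $J$ on $L^1[\dxb,\gxb]$, the bijection being $\tilde f\mapsto e^{-\lambda a}\tilde f(x_b)$, while Lemma~\ref{lemma-eigenfunction-J1} supplies the positive fixed point $\tilde f_i$ and states that it is unique up to a multiplicative constant. The proposition is then essentially immediate.

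More concretely, I would first set $f_i(x_b,a):=e^{-\lambda a}\tilde f_i(x_b)$ and verify that $f_i\in\mathcal D(\mathcal A)$. Clearly $f_i\in E$ and $\partial f_i/\partial a=-\lambda f_i\in E$, so $f_i\in W_1(X)$. The trace condition $\mathcal Tf_i=\mathcal Pf_i$ reduces, via the substitution $y=S_a^{-1}(x_b)$ performed in the proof of Lemma~\ref{l:opJ}, to the identity $\tilde f_i=J\tilde f_i$, which holds by Lemma~\ref{lemma-eigenfunction-J1}. Once $f_i\in\mathcal D(\mathcal A)$ is established, the eigenvalue equation follows from the one-line computation $\mathcal Af_i=-\partial f_i/\partial a=\lambda f_i$.

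For uniqueness, let $g\in\mathcal D(\mathcal A)$ satisfy $\mathcal Ag=\lambda g$. Applying Lemma~\ref{l:opJ} in the forward direction forces $g(x_b,a)=e^{-\lambda a}g(x_b,0)$ with $g(\cdot,0)$ a fixed point of $J$, and Lemma~\ref{lemma-eigenfunction-J1} then forces $g(\cdot,0)=c\tilde f_i$ for some scalar $c$, whence $g=cf_i$.

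Since both ingredients have already been proved, there is no real obstacle here; the only point worth double-checking is domain membership, that is, that $f_i$ built from the merely continuous function $\tilde f_i$ really lies in the trace-constrained space $\mathcal D(\mathcal A)$. The $a$-smoothness is provided for free by the exponential factor, and the trace condition collapses to the fixed-point equation $J\tilde f_i=\tilde f_i$, so the verification is purely bookkeeping.
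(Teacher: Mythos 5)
Your proposal is correct and follows essentially the same route as the paper, which likewise deduces the proposition directly from Lemma~\ref{l:opJ} (the equivalence between eigenvectors of $\mathcal A$ at $\lambda$ and fixed points of $J$) together with the existence and uniqueness of the fixed point from Lemma~\ref{lemma-eigenfunction-J1}. Your explicit check that $f_i$ lies in $\mathcal D(\mathcal A)$ is a reasonable piece of bookkeeping that the paper leaves implicit.
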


\section{Asymptotic behaviour}
 \label{s:asyp-beh}
We precede the formulation of the main result of this section by some definitions and some general theorem concerning asymptotic stability of stochastic semigroups.

Let a triple $(X,\Sigma,m)$ be a $\sigma$-finite measure space.
Denote by $D$ the subset of the space
$L^1=L^1(X,\Sigma,m)$
which contains all
densities
\[
D=\{f\in \, L^1\colon  \,f\ge 0,\,\, \|f\|=1\}.
\]
A $C_0$-semigroup $\{P(t)\}_{t\ge 0}$ of linear operators on $L^1$
is called \textit{stochastic semigroup} or \textit{Markov semigroup}
if   $P(t)(D)\subseteq  D$ for each $t\ge 0$.

A stochastic semigroup $\{P(t)\}_{t\ge 0}$ is
{\it asymptotically stable} if  there exists a density $f_i$   such that
\begin{equation}
\label{d:as}
\lim _{t\to\infty}\|P(t)f-f_i\|=0 \quad \text{for}\quad f\in D.
\end{equation}
 From (\ref{d:as}) it follows immediately that  $f_i$ is {\it invariant\,} with respect to
$\{P(t)\}_{t\ge 0}$, i.e.  $P(t)f_i=f_i$ for
each $t\ge 0$.

A stochastic semigroup $\{P(t)\}_{t\ge 0}$
is called {\it partially integral} if there exists a measurable
function $k\colon (0,\infty)\times X\times X\to[0,\infty)$, called a
{\it kernel}, such that
\begin{equation*}
P(t)f(x)\ge\int_X k(t,x,y)f(y)\,m (dy)
\end{equation*}
for every density $f$ and
\begin{equation*}
\int_X\int_X  k(t,x,y)\,m(dx)\,m(dy)>0
\end{equation*}
for some $t>0$. The following result was proved in \cite{PR-jmaa2}.

\begin{theorem}
\label{asym-th2}
Let $\{P(t)\}_{t\ge 0}$ be a partially integral stochastic
semigroup. Assume that the  semigroup $\{P(t)\}_{t\ge 0}$ has
a unique invariant density $f_i$. If $f_i>0$ a.e., then the semigroup
$\{P(t)\}_{t\ge 0}$ is asymptotically stable.
\end{theorem}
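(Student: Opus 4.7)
The plan is to argue by a Foguel-type dichotomy: for a partially integral stochastic semigroup, either the semigroup is asymptotically stable, or it is \emph{sweeping} in the sense that $\int_B P(t)f\,m(dx)\to 0$ as $t\to\infty$ for every density $f$ and every measurable set $B$ of finite measure. The hypothesis $f_i>0$ a.e., combined with integrability of $f_i$, rules out the sweeping alternative, forcing asymptotic stability. Uniqueness of $f_i$ enters to identify the limit.

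First I would work at discrete time. Fix $t_0>0$ and a kernel $k(t_0,\cdot,\cdot)\ge 0$ from the partial integrality assumption with $\int\int k(t_0,x,y)\,m(dx)\,m(dy)>0$. Set $T=P(t_0)$, so that $T$ is a stochastic Markov operator on $L^1$ satisfying $Tf(x)\ge \int k(t_0,x,y)f(y)\,m(dy)$. For this $T$, I would establish the classical Foguel alternative: either there exists a \emph{lower function}, i.e.\ $h\in L^1_+$ with $\|h\|>0$ such that $\lim_{n\to\infty}\|(h-T^nf)^+\|=0$ for every density $f$, or $T^nf\to 0$ weakly on every set of finite measure for every density $f$. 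The construction is standard: iterating the minorant $k$ produces an integral minorant for $T^n$; either the resulting decreasing sequence of ``escape masses'' has a nontrivial lower bound (producing $h$), or it vanishes (producing sweeping). Because $t\mapsto P(t)f$ is continuous, the discrete-time conclusion for $T$ lifts to the continuous semigroup $\{P(t)\}_{t\ge 0}$.

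Second, I would rule out the sweeping alternative using $f_i$. Since $f_i>0$ a.e.\ and $\|f_i\|=1$, the set $B_\varepsilon=\{x:f_i(x)>\varepsilon\}$ has finite $m$-measure for every $\varepsilon>0$ and satisfies $\int_{B_\varepsilon}f_i\,dm>0$ for all sufficiently small $\varepsilon$. If the semigroup were sweeping, then $\int_{B_\varepsilon}f_i\,dm=\int_{B_\varepsilon}P(t)f_i\,dm\to 0$, a contradiction. Hence the lower function alternative holds. Given a lower function $h$, the set $A=\{f\in D:\lim_{t\to\infty}\|P(t)f-f_i\|=0\}$ is easily checked to be closed, convex, and invariant under all $P(t)$; moreover, any $L^1$-cluster point $g$ of $\{P(t)f\}$ is a density (by Scheff\'e, since densities form a closed set and masses are preserved) that dominates a shift of $h$, and an averaging argument (pushing $g$ forward by $P(s)$ and invoking compactness of Ces\`aro averages in $L^1$) produces an invariant density, which by uniqueness equals $f_i$. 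Hence every cluster point is $f_i$, so $P(t)f\to f_i$ in $L^1$.

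The main obstacle is the Foguel dichotomy itself: proving that partial integrality forces the clean alternative between the existence of a lower function and sweeping on sets of finite measure. This requires a Hopf-type decomposition and careful control of the iterated kernels $k^{(n)}$ arising from $T^n$, including a measure-theoretic argument that the limiting escape mass is either zero or bounded below by a fixed minorant on a set of positive measure. Once this dichotomy is in hand, the rest of the argument is a short application of the hypotheses $f_i>0$ and uniqueness.
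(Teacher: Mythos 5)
You should first be aware that the paper does not prove Theorem~\ref{asym-th2} at all: it is quoted from \cite{PR-jmaa2}, so the only meaningful comparison is with the argument in that reference, which does indeed belong to the same circle of ideas you invoke (lower functions, a Foguel-type alternative, exclusion of sweeping by the invariant density). Your outline is therefore pointed in the right direction, but it has a genuine structural gap.

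The gap is in your first step. The dichotomy ``asymptotically stable or sweeping from sets of finite measure'' does \emph{not} follow from partial integrality alone, so the architecture of your proof --- derive the alternative from partial integrality, then use $f_i>0$ a.e.\ only to exclude the sweeping branch --- cannot work as stated. A direct sum of two asymptotically stable, partially integral semigroups acting on disjoint pieces of the state space is partially integral, is not sweeping (each piece carries an invariant density), and is not asymptotically stable (it has two invariant densities with disjoint supports). Hence any correct proof must feed the uniqueness and the a.e.-positivity of $f_i$ \emph{into} the derivation of the alternative, not merely use them afterwards; in \cite{PR-jmaa2} this is exactly what happens: positivity and uniqueness of the invariant density are used to show that the (discretized) semigroup is a conservative, ergodic Harris operator --- equivalently, that the iterated integral minorants accumulate full mass on the support of $f_i$ --- and only then does the lower-function criterion (existence of a nontrivial lower function is \emph{equivalent} to asymptotic stability) close the argument. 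Two further points would need repair even granting the dichotomy: norm cluster points of $\{P(t)f\}$ in $L^1$ need not exist and Ces\`aro averages of a bounded $L^1$-sequence need not be relatively compact, so your identification of the limit via cluster points is unjustified and should be replaced by the lower-function theorem itself; and an invariant density of the single operator $T=P(t_0)$ need not be invariant for the whole semigroup, so uniqueness of $f_i$ for $\{P(t)\}_{t\ge 0}$ cannot be invoked at that stage without an additional averaging over $s\in[0,t_0]$.
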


New results concerning positive operators on Banach lattices similar in spirit to Theorem~\ref{asym-th2} may be found in \cite{Gerlach-Gluck1,Martin-Gluck2}.

Investigation of the long-time behaviour  of the semigroup $\{T(t)\}_{t\ge 0}$ 
can be reduced to the study of asymptotic stability of some stochastic semigroup. 
Let $\lambda$ and $v$ be the eigenvalue and the eigenfunction from Proposition~\ref{prop-eig1}.
We define a semigroup  $\{P(t)\}_{t\ge 0}$ as the extension of semigroup  $\{e^{-\lambda t}T(t)\}_{t\ge 0}$  to 
the space $E_1=L^1(X,\mathcal B(X),\mu)$ with measure $\mu$ given by $d\mu=v\, d\ell$.
Observe that we can indeed  extend the semigroup  $\{e^{-\lambda t}T(t)\}_{t\ge 0}$ to a stochastic semigroup on $E_1$. 
Since $\mathcal A^*v=\lambda v$, we have $T^*(t)v=e^{\lambda t} v$. 
If $f\in E$ then $P(t)f=e^{-\lambda t}T(t)f$ and
\begin{align*}
&\iint\limits_X P(t)f(x_b,a)\,\mu(dx_b,da)=\iint\limits_X e^{-\lambda t}T(t)f(x_b,a) v(x_b,a)\,dx_b\,da\\
&=\iint\limits_X f(x_b,a) e^{-\lambda t}T^*(t)v(x_b,a)\,dx_b\,da
=\iint\limits_X f(x_b,a) v(x_b,a)\,dx_b\,da\\
&=\iint\limits_X f(x_b,a) \,\mu(dx_b,da).
\end{align*}
Since the function $v$ is bounded and positive almost everywhere, $E$ is dense in $E_1$. 
If $f\in E_1$, we choose a sequence $(f_n)$ from $E$ such that $f_n\to f$ in $E_1$ and define $P(t)f=\lim\limits_{n\to\infty} P(t)f_n$ in $E_1$.
Since the operators $P(t)$ are positive and preserve the integral with respect to $\mu$, this extension is uniquely defined  and  
$\{P(t)\}_{t\ge 0}$ is a stochastic semigroup on $E_1$.

In order to prove asymptotic stability of the semigroup $\{P(t)\}_{t\ge 0}$ we need to add an additional assumption concerning function $g$:
  
\noindent  (A7) there exists $x\in (\dxb,\gxb)$ such that $g(2x)\ne 2g(x)$.

We precede the formulation of a theorem on asymptotic stability of $\{P(t)\}_{t\ge 0}$ by the following lemma.
\begin{lemma}
\label{lemma-partially-integral}
Assume {\rm (A1)--(A7)}.
Then the semigroup $\{T(t)\}_{t\ge 0}$ is partially integral.
\end{lemma}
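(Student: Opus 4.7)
The plan is to use a Dyson--Phillips-type expansion of $T(t)$ indexed by the number of division events, and to extract a positive absolutely continuous kernel from lineages that undergo at least two divisions. Writing $T(t) = \sum_{n \ge 0} T^{(n)}(t)$, where $T^{(n)}(t)$ corresponds to trajectories that divide exactly $n$ times on $[0,t]$, one obtains this decomposition either directly from the Neumann series for the resolvent identity \eqref{R(l,a)} or, more intuitively, by iterating the renewal equation for the boundary trace $b(t,x_b) = z(t,x_b,0)$ that results from combining the method of characteristics with condition \eqref{eq2-n}. Each $T^{(n)}(t)$ is positive, so any pointwise lower bound on one of them transfers to $T(t)$.

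The piece $T^{(0)}(t) = T_0(t)$ is pure translation and carries no absolutely continuous component. For $T^{(1)}(t)$, a single division at time $s$ sends an initial cell at $(y,a)$ to the daughter at $(S_{a+s}(y), t-s)$; fixing the target $(x_b',a')$ pins $s = t-a'$ and forces $(y,a)$ onto a one-dimensional curve, so $T^{(1)}(t)$ is also not integral. The decisive contribution comes from $T^{(2)}(t)$: two divisions at times $s_1 < s_2$ map $(y,a)$ to $(x_b',a') = (S_{s_2-s_1}(S_{a+s_1}(y)),\, t-s_2)$, and now $(s_1,s_2)$ supplies two free parameters.

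Using $\partial \pi_r z/\partial r = g(\pi_r z)$ and $\partial \pi_r z/\partial z = g(\pi_r z)/g(z)$, a direct calculation yields the Jacobian
\[
\frac{\partial(x_b',a')}{\partial(s_1,s_2)} = \frac{g(2u_2)}{2}\left[1 - \frac{g(2u_1)}{2g(u_1)}\right],\qquad u_1 = S_{a+s_1}(y),\quad u_2 = S_{s_2-s_1}(u_1).
\]
Assumption (A7) provides a point $x^* \in (\dxb,\gxb)$ with $g(2x^*) \ne 2g(x^*)$, and by continuity of $g$ the bracket stays bounded away from zero on a neighbourhood of $u_1 = x^*$. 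Combined with (A3)--(A6), which keep the division ages $a+s_1$ and $s_2-s_1$ inside the open supports $(\da(y),\ga(y))$ and $(\da(u_1),\ga(u_1))$ respectively, the change of variables $(s_1,s_2) \mapsto (x_b',a')$ produces a continuous kernel $k(t,x_b',a',y,a)$ that is strictly positive on a subset of $X\times X$ of positive Lebesgue measure. Since $T(t)f \ge T^{(2)}(t)f \ge \int_X k(t,x_b',a',y,a)\,f(y,a)\,dy\,da$ for $f \ge 0$, the semigroup is partially integral.

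The main obstacle is the bookkeeping: one must exhibit an open set of parameters $(y,a,s_1,s_2)$ on which the Jacobian is non-zero, the intermediate size $u_1$ lies in $[\dxb,\gxb]$, both division ages fall in the positivity region of $q$, and the target $(x_b',a')$ sweeps out an open subset of $X$. The continuity hypotheses (A1)--(A4) together with the geometric hypotheses (A5)--(A6) and the crucial local non-degeneracy (A7) fit together to make this bookkeeping possible.
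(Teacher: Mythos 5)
Your proof is correct and follows essentially the same route as the paper: a Dyson--Phillips (generation) expansion by number of divisions, with the $n=2$ term supplying an absolutely continuous kernel because the two division times give a nondegenerate change of variables whose Jacobian vanishes only where $g(2u)=2g(u)$, which (A7) excludes on an open set. The only cosmetic difference is that the paper runs the computation on the dual semigroup $\{T^{\odot}(t)\}_{t\ge0}$ and transposes the kernel, whereas you work with $T(t)$ directly; your Jacobian $\tfrac{g(2u_2)}{2}\bigl[1-\tfrac{g(2u_1)}{2g(u_1)}\bigr]$ matches the paper's up to sign.
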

\begin{proof}
Observe that the operator $T(t)$ has the kernel $k(t,x,y)$ if and only if the operator $T^*(t)$ has the kernel $k^*(t,x,y)=k(t,y,x)$. Thus, in order to prove that 
the semigroup $\{T(t)\}_{t\ge 0}$ is partially integral it is sufficient to check that the semigroup $\{T^{\odot}(t)\}_{t\ge 0}$ has the similar property.
The semigroup $\{T^{\odot}(t)\}_{t\ge 0}$ is given by the \textit{Dyson-Phillips expansion}
\begin{equation}
\label{eq:dpf1b}
T^{\odot}(t)f=\sum_{n=0}^\infty  T^{\odot}_n(t)f, 
\end{equation}
where
\begin{equation*}
T^{\odot}_{n+1}f(t)=
\int_0^tT^{\odot}_{0}(\tau)\mathcal HT^{\odot}_n(t-\tau)f\,d\tau, \quad
n\ge 0,
\end{equation*}
where $\mathcal Hf(x_b,a)=2q(x_b,a)f(S_a(x_b),0)$
and $T^{\odot}_0(t)f(x_b,a)=f(x_b,a+t)$
for $a\le \ga(x_b)-t$.
Since $\mathcal HT^{\odot}_0(t-\tau)f(x_b,a)= 2q(x_b,a)f(S_a(x_b),t-\tau)$,
we have 
\begin{align*}
T^{\odot}_1f(t)(x_b,a)&
=\int_0^tT^{\odot}_{0}(\tau)\mathcal HT^{\odot}_0(t-\tau)f(x_b,a)\,d\tau\\
&=\int_0^t 2q(x_b,a+\tau)f(S_{a+\tau}(x_b),t-\tau)
\,d\tau.
\end{align*}
Analogously, since 
\[ 
T^{\odot}_1(t-\tau_1)f(x_b,a)=\int_0^{t-\tau_1} 2q(x_b,a+\tau)f(S_{a+\tau}(x_b),t-\tau_1-\tau)
\,d\tau,
\]
we have  
\[ 
\mathcal HT^{\odot}_1(t-\tau_1)f(x_b,a)=2q(x_b,a)\int_0^{t-\tau_1} 2q(S_a(x_b),\tau)f(S_{\tau}(S_a(x_b)),t-\tau_1-\tau)
\,d\tau,
\]
and finally 
 \begin{align*}
T^{\odot}_2f(t)(x_b,a)&
=\int_0^tT^{\odot}_{1}(\tau_1)\mathcal HT^{\odot}_1(t-\tau_1)f(x_b,a)\,d\tau_1\\
&=\int_0^t
2q(x_b,a+\tau_1)\int_0^{t-\tau_1} 2q(S_{a+\tau_1}(x_b),\tau)\\
&\hspace{3cm}{}\cdot f(S_{\tau}(S_{a+\tau_1}(x_b)),t-\tau_1-\tau)
\,d\tau \,d\tau_1.
\end{align*}
We substitute in the last integral $\tilde x=S_{\tau}(S_{a+\tau_1}(x_b))$
and $\tilde a=t-\tau_1-\tau$. Then
\begin{align*}
\frac{\partial \tilde x}{\partial \tau}&=\frac12g\big(S_{\tau}(S_{a+\tau_1}(x_b))\big), 
\\ 
\frac{\partial \tilde x}{\partial \tau_1}&=\frac12
\frac{g(\pi_{\tau}S_{a+\tau_1}(x_b))}{g(S_{a+\tau_1}(x_b))}
\cdot \frac12 g\big(S_{a+\tau_1}(x_b)\big)=
\frac14g(\pi_{\tau}S_{a+\tau_1}(x_b)),\\
\frac{\partial \tilde a}{\partial \tau}&=
\frac{\partial \tilde a}{\partial \tau_1}=-1. 
\end{align*} 
Let $\mathcal J_{\tau,\tau_1}$ be the Jacobian matrix of the transformation $(x_b,a)\mapsto (\tilde x,\tilde a)$. Then
\[
\det \mathcal J_{\tau,\tau_1}(x_b,a)=\frac14g(\pi_{\tau}S_{a+\tau_1}(x_b))-\frac12g\big(S_{\tau}(S_{a+\tau_1}(x_b))\big).
\]
According to (A7) there exists
$x\in (\dxb,\gxb)$ such that $g(2x)\ne 2g(x)$.
We find $x^1_b \in (\dxb,\gxb)$ and $\tau^0>0$ such that 
$q(x_b^1,\tau^0)>0$ and $S_{\tau^0}(x_b^1)=x$, i.e. 
 $x$ and $x_b^1$ are the initial sizes of  daughter and mother cells.
 Next we find $x^0_b \in (\dxb,\gxb)$, $a^0>0$, and $\tau_1^0>0$ such that 
$q(x_b^0,a^0+\tau_1^0)>0$ and $S_{a^0+\tau_1^0}(x_b^0)=x^1_b$. We also choose $t>0$ such that the point $(x,t-\tau^0-\tau_1^0)$ lies in the interior of the set $X$.
Then we find a neighbourhood  $\mathcal U$ of the point $(\tau^0,\tau_1^0,x_b^0,a^0)$ such that 
$\det \mathcal J_{\tau,\tau_1}(x_b,a)\ne 0$, 
$q(x_b,a+\tau_1)>0$, $q(S_{a+\tau_1}(x_b),\tau)>0$,
$(S_{\tau}(S_{a+\tau_1}(x_b)),t-\tau-\tau_1)\in X$
for $(\tau,\tau_1,x_b,a)\in \mathcal U$. Thus there exist  neighbourhoods $V_1$ and $V_2$ of the points $(x_b^0,a^0)$ and $(x,t-\tau_1^0-\tau^0)$
and there exist $\varepsilon>0$ and a nonnegative kernel $k(t,x_b,a,\tilde x,\tilde a)$ 
such that $k(t,x_b,a,\tilde x,\tilde a)\ge \varepsilon$ for $(x_b,a,\tilde x,\tilde a)\in V_1\times V_2$ and
\[
T^{\odot}_2f(t)(x_b,a)\ge \iint\limits_{X} k(t,x_b,a,\tilde x,\tilde a)f(\tilde x,\tilde a) \,d\tilde x\,d\tilde a.
\] 
From (\ref{eq:dpf1b}) it follows that the semigroups $\{T^{\odot}(t)\}_{t\ge 0}$ and $\{T(t)\}_{t\ge 0}$ are partially integral.
\end{proof}

\begin{theorem}
\label{asym-as-2}
Assume {\rm (A1)--(A7)}.
Then the semigroup $\{P(t)\}_{t\ge 0}$ is asymptotically stable. 
The eigenfunction of $\mathcal A$ from  Proposition~$\ref{prop-eig2}$ is the invariant density of  $\{P(t)\}_{t\ge 0}$.
\end{theorem}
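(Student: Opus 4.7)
The plan is to verify the three hypotheses of Theorem~\ref{asym-th2} for the stochastic semigroup $\{P(t)\}_{t\ge 0}$ on $E_1$: partial integrality, existence of a $\mu$-a.e.\ positive invariant density, and uniqueness of that density.

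Partial integrality is inherited from Lemma~\ref{lemma-partially-integral}. If $k(t,\cdot,\cdot)\ge 0$ is a kernel with $T(t)f(x)\ge\int_X k(t,x,y)f(y)\,dy$ on $E$ and $\iint k(t,x,y)\,dx\,dy>0$, then for nonnegative $f\in E$
\[
P(t)f(x)=e^{-\lambda t}T(t)f(x)\ge\int_X\tilde k(t,x,y)f(y)\,d\mu(y),\quad \tilde k(t,x,y)=\frac{e^{-\lambda t}k(t,x,y)}{v(y)},
\]
and the inequality extends to every density $f\in E_1$ by the $E_1$-continuity of $P(t)$ and of the integration against $\tilde k$. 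The positivity $\iint\tilde k(t,x,y)\,d\mu(x)\,d\mu(y)=e^{-\lambda t}\iint k(t,x,y)v(x)\,dx\,dy>0$ holds because $v$ is positive $\ell$-a.e. For the invariant density, take $f_i(x_b,a)=e^{-\lambda a}\tilde f_i(x_b)$ from Proposition~\ref{prop-eig2}, rescaled so that $\int_X f_i\,d\mu=1$. The eigenvalue identity $\mathcal Af_i=\lambda f_i$ gives $T(t)f_i=e^{\lambda t}f_i$, hence $P(t)f_i=f_i$, and Lemma~\ref{lemma-eigenfunction-J1} ensures that $f_i>0$ on the $\mu$-full set $\{(x_b,a):\dxb<x_b<\gxb,\,a<\ga(x_b)\}$.

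The main obstacle is uniqueness of the invariant density, and my strategy is to reduce it to the one-dimensionality of the fixed-point set of the integral operator $J$ (Lemma~\ref{lemma-eigenfunction-J1}). Given an invariant density $h\in E_1$, the identity $P(t)h=h$ written in integrated (mild) form along the characteristics of $A_0=-\partial/\partial a$ forces $h$ to take the form $h(x_b,a)=e^{-\lambda a}\tilde h(x_b)$, with the boundary trace $\tilde h$ satisfying the self-consistency equation $J\tilde h=\tilde h$, exactly as in the derivation of the fixed-point equation for $\tilde f_i$ in Lemma~\ref{l:opJ}. Lemma~\ref{lemma-eigenfunction-J1} then forces $\tilde h$ to be a scalar multiple of $\tilde f_i$, and \eqref{A-Eigenv1} together with the normalization $\int h\,d\mu=1$ yields $h=f_i$. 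The delicate point is that $h$ need not lie in $\mathcal D(\mathcal A)$ or even in $E$, so the argument must be performed at the level of the trace formulation and the $E_1$-extension of $\{P(t)\}$, rather than by invoking the strong eigenvalue equation for $\mathcal A$; here I would rely on the boundedness of $v$ (which provides the continuous embedding $E\hookrightarrow E_1$) and on the kernel representation produced in the proof of Lemma~\ref{lemma-partially-integral} to establish sufficient regularity of $h$ for the trace to be well defined. With uniqueness in place, Theorem~\ref{asym-th2} immediately delivers the asymptotic stability of $\{P(t)\}_{t\ge 0}$, and $f_i$ is the claimed invariant density.
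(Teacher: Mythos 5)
Your proposal is correct and takes essentially the same route as the paper: verify the hypotheses of Theorem~\ref{asym-th2} by transferring partial integrality from Lemma~\ref{lemma-partially-integral} to the measure $\mu$ and by producing the normalized invariant density $f_i$ from Proposition~\ref{prop-eig2}. Your treatment of uniqueness is in fact more careful than the paper's one-line appeal to Proposition~\ref{prop-eig2}: since an invariant density of $\{P(t)\}_{t\ge 0}$ lives in $E_1\supset E$ and need not be an eigenfunction of $\mathcal A$ on $E$, your reduction along characteristics to the fixed-point equation $J\tilde h=\tilde h$ and then to Lemma~\ref{lemma-eigenfunction-J1} supplies exactly the step the paper leaves implicit.
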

\begin{proof}
We need to check that the semigroup $\{P(t)\}_{t\ge 0}$ satisfies assumptions of Theorem~\ref{asym-th2}. 
Since the semigroup $\{T(t)\}_{t\ge 0}$ is partially integral and $P(t)f=e^{-\lambda t}T(t)f$ for $f\in E$,   
the same property has the semigroup $\{P(t)\}_{t\ge 0}$.
According to Proposition~$\ref{prop-eig2}$ there exists a function $f_i$ such that $\mathcal Af_i=\lambda f_i$ and $f_i>0$ a.e.
As $\mu(X)<\infty$ and $f_i$ is bounded,  $f_i$ is integrable with respect to $\mu$. 
Since  the eigenfunction is determined up to a multiplicative constant, we may assume that $\int_X f_i\,d\mu=1$.
Also according to Proposition~$\ref{prop-eig2}$
the function $f_i$ is the unique invariant density of $\{P(t)\}_{t\ge 0}$.
\end{proof}
\begin{theorem} 
\label{th:long-time-u}
For every $u_0\in E$ we have
\begin{equation}
\label{d:as4}
\lim _{t\to\infty}e^{-\lambda t}U(t)u_0=\Phi f_i\iint\limits_X u_0(x_b,a)\Psi(x_b,a)v(x_b,a)\,dx_b\,da \quad \textrm{in $E$}.
\end{equation}
Moreover, $\Phi f_i$ and $\Psi v$ are eigenfunctions of the semigroups $\{U(t)\}_{t\ge 0}$ and $\{U^*(t)\}_{t\ge 0}$ corresponding to the eigenvalue $\lambda$.
\end{theorem}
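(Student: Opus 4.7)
The plan is to transfer the asymptotic stability of $\{P(t)\}_{t\ge 0}$ on $E_1$, established in Theorem~\ref{asym-as-2}, back to the original semigroup $\{U(t)\}_{t\ge 0}$ using the substitution $z=\Psi u$ that linked \eqref{eq1}--\eqref{eq3} to \eqref{eq1-n}--\eqref{eq3-n}. Undoing that substitution gives
\[
U(t)u_0 = \Phi\cdot T(t)(\Psi u_0),\qquad\textrm{hence}\qquad e^{-\lambda t}U(t)u_0 = \Phi\cdot P(t)(\Psi u_0),
\]
where we interpret $P(t)$ via its extension to $E_1=L^1(X,\mathcal B(X),\mu)$ with $d\mu=v\,d\ell$. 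So the result reduces to showing that $\Phi\cdot P(t)(\Psi u_0)$ converges in $E$ to $\Phi f_i\iint_X u_0\Psi v\,dx_b\,da$.

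First I would verify that $\Psi u_0\in E_1$ whenever $u_0$ lies in the natural domain of $\{U(t)\}_{t\ge 0}$. By the two-sided estimate $c_1\Phi\le v\le c_2\Phi$ of Proposition~\ref{prop-eig1}, one has $\Psi v\in[c_1,c_2]$, so
\[
\int_X |\Psi u_0|\,d\mu=\iint_X |u_0|\,\Psi v\,dx_b\,da\le c_2\iint_X|u_0|\,dx_b\,da,
\]
which is finite in view of \eqref{def-u_0}. The same estimate shows that multiplication by $\Phi$ is a bounded map $E_1\to E$, since $\int_X|\Phi g|\,d\ell\le c_1^{-1}\int_X |g|v\,d\ell$. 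Hence any convergence in $E_1$ is preserved after multiplying by $\Phi$.

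Next I would extend the asymptotic stability conclusion of Theorem~\ref{asym-as-2} from the set of densities $D$ to all of $E_1$. Writing $f=f_+-f_-$ with $f_\pm\ge 0$ and applying the stability statement to each normalized part, one obtains for every $f\in E_1$
\[
\lim_{t\to\infty}\Bigl\|P(t)f-f_i\int_X f\,d\mu\Bigr\|_{E_1}=0.
\]
Applying this with $f=\Psi u_0$ and multiplying by $\Phi$ gives \eqref{d:as4} immediately, because $\int_X \Psi u_0\,d\mu=\iint_X u_0\Psi v\,dx_b\,da$.

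Finally, for the eigenfunction assertions, I would use the intertwinings $U(t)h = \Phi T(t)(\Psi h)$ and, by taking adjoints against the $L^1$--$L^\infty$ pairing, $U^*(t)h = \Psi T^*(t)(\Phi h)$ (noting $\Phi$ is bounded since $\Phi\le 1$). Combining these with $T(t)f_i=e^{\lambda t}f_i$ (Proposition~\ref{prop-eig2}) and $T^*(t)v=e^{\lambda t}v$ (Proposition~\ref{prop-eig1}), and using $\Phi\Psi\equiv 1$, yields
\[
U(t)(\Phi f_i)=\Phi T(t)f_i=e^{\lambda t}\Phi f_i,\qquad U^*(t)(\Psi v)=\Psi T^*(t)v=e^{\lambda t}\Psi v.
\]
The main obstacle is purely bookkeeping: one has to control the passage between the weighted space $E_1$ and the unweighted $E$, and both directions rely crucially on the sharp two-sided comparison $v\asymp\Phi$ from \eqref{eigenfun-A*}. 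Without that estimate, one could get convergence in $E_1$ without being able to identify the limit in $E$.
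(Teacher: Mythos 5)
Your proposal is correct and follows essentially the same route as the paper: undo the substitution $z=\Psi u$ to write $U(t)u_0=\Phi T(t)(\Psi u_0)$, use the two-sided bound $c_1\Phi\le v\le c_2\Phi$ from Proposition~\ref{prop-eig1} to pass between $E$ and $E_1$ in both directions, and upgrade the asymptotic stability of $\{P(t)\}_{t\ge 0}$ from densities to all of $E_1$ by linearity. The only cosmetic slip is that the finiteness of $\int_X|\Psi u_0|\,d\mu$ follows already from $u_0\in E$ via your own estimate, so the appeal to \eqref{def-u_0} is unnecessary; otherwise the argument, including the adjoint intertwining for the eigenfunction claims, matches the paper's proof.
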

\begin{proof}
Condition of asymptotic stability of the semigroup $\{P(t)\}_{t\ge 0}$ can be written in the following equivalent form: 
for every $f\in E_1$ we have
\begin{equation}
\label{d:as1}
\lim _{t\to\infty}P(t)f=f_i\iint\limits_X f(x_b,a)v(x_b,a)\,dx_b\,da \quad \textrm{in $E_1$}.
\end{equation}
We can extend the semigroup $\{T(t)\}_{t\ge 0}$ to a $C_0$-semigroup on $E_1$ setting $T(t)f=e^{\lambda t}P(t)f$ for $f\in E_1$.
From (\ref{d:as1}) it follows that 
\begin{equation}
\label{d:as2}
\lim _{t\to\infty}e^{-\lambda t}T(t)f=f_i\iint\limits_X f(x_b,a)v(x_b,a)\,dx_b\,da \quad \textrm{in $E_1$}.
\end{equation}
Now we return to the problem  (\ref{eq1})--(\ref{eq3}). We recall that 
after substitution $z(t,x_b,a)=u(t,x_b,a)\Psi(x_b,a)$  and  $z_0(x_b,a)=u_0(x_b,a)\Psi(x_b,a)$ we have replaced this 
problem by  the system (\ref{eq1-n})--(\ref{eq3-n}) and the semigroup $\{T(t)\}_{t\ge 0}$ describes the evolution of the solutions of 
this system. Since $z=u\Psi$ and $z_0=u_0\Psi$, we have $u(t)=\Phi T(t)(u_0\Psi)$ because $\Phi=1/\Psi$.
Thus we can define a semigroup $\{U(t)\}_{t\ge 0}$ corresponding to (\ref{eq1})--(\ref{eq3}) by 
\begin{equation}
\label{d:as-defU}
U(t)u_0=\Phi T(t)(u_0\Psi).
\end{equation}
From inequalities  (\ref{eigenfun-A*}) it follows that $u_0\Psi\in E_1$ if and only if $u_0\in E$ and 
$\{U(t)\}_{t\ge 0}$ is a $C_0$-semigroup on the space $E$.
It should be noted that we consider solutions of (\ref{eq1})--(\ref{eq3}) for a wider class of initial conditions
because we do not assume that $u_0$ satisfies inequality (\ref{def-u_0}). From (\ref{d:as2}) it follows that   
\begin{equation*}
\lim _{t\to\infty}e^{-\lambda t}\Psi U(t)u_0=f_i\iint\limits_X u_0(x_b,a)\Psi(x_b,a)v(x_b,a)\,dx_b\,da \quad \textrm{in $E_1$}.
\end{equation*}
Using again inequalities  (\ref{eigenfun-A*}) we finally obtain \eqref{d:as4}.
\end{proof}

Property (\ref{d:as4})  is called the asynchronous exponential growth of the semigroup  $\{U(t)\}_{t\ge 0}$.
Precisely, we say that a semigroup $\{U(t)\}_{t\ge 0}$ on a Banach space $\mathbb X$
has \textit{asynchronous} (or \textit{balanced}) \textit{exponential growth} if there exist $\lambda\in\mathbb C$,
a  nonzero $x_i\in \mathbb X$, and a nonzero linear functional $\alpha\colon \mathbb X\to \mathbb C$ 
such that 
\begin{equation*}
\lim_{t\to\infty}e^{-\lambda t}U(t)x=x_i\alpha(x)\quad\textrm{for $x\in\mathbb X$}.
\end{equation*}
It should be mentioned that one can find in literature, e.g. \cite{Webb-aeg}, a more general definition of asynchronous exponential growth,
where it is only assumed that $e^{-\lambda t}U(t)x$ converges to a nonzero finite rank operator.

\section{Remarks}
 \label{s:remarks}
\subsection{Chemostat}
 \label{ss:chemostat}
 In Section~\ref{s:model} we have mentioned that if we consider experiment in a chemostat, then we need to add to the right-hand side of equation (\ref{eq1}) the term $-Du(t,x_b,a)$. In this case we substitute $u(t,x_b,a)=e^{-Dt}\bar u(t,x_b,a)$ and then we check that the function $\bar u$ satisfies the system (\ref{eq1})--(\ref{eq3}). 
 From Theorem~\ref{th:long-time-u} we deduce that
\begin{equation}
\label{d:as5}
\lim _{t\to\infty}e^{(D-\lambda)t}U(t)u_0=\Phi f_i\iint\limits_X u_0(x_b,a)\Psi(x_b,a)v(x_b,a)\,dx_b\,da \quad \textrm{in $E$}.
\end{equation}
From~(\ref{d:as5}) it follows that in order to grow cells under constant environmental conditions, 
cells should be removed from the system with rate $D=\lambda$.

\subsection{Age-size structured model}
 \label{ss:age-size}
Now we consider an age-size structured model consistent with our biological description.  
Let $\bar p(x,a)\Delta t$  be the probability that a cell with size $x$ and age $a$ splits in the time interval of the length $\Delta t$.  
Since such a cell had the initial size $x_b=\pi_{-a}x$,
we see that
\begin{equation*}
\bar p(x,a)=p(\pi_{-a}x,a)= q(\pi_{-a}x,a)\big/\textstyle{\int_a^{\infty}} q(\pi_{-a}x,r)\,dr 
\end{equation*}
for $a<\ga(\pi_{-a}x)$. We set $\bar p(x,a)=0$ for $a\ge \ga(\pi_{-a}x)$.
Let  $w(t,x,a)$ be the number of cells   having  size $x$ and age $a$ at time $t$.
Then the function $w$ satisfies the following initial-boundary problem:
\begin{align}
\label{eq1-w}
&\frac{\partial w}{\partial t}(t,x,a)
 +\frac{\partial w}{\partial a}(t,x,a)
  +\frac{\partial (gw)}{\partial x}(t,x,a)
 =-\bar p(x,a)w(t,x,a),\\
&w(t,x,0)=4\int_{0}^{\infty}\bar p(2x,a)w(t,2x,a)\,da, 
\label{eq2-w}\\
&w(0,x,a)=w_0(x,a). 
\label{eq3-w}
\end{align}
We have the following relationship
between solutions of the systems (\ref{eq1})--(\ref{eq3}) and (\ref{eq1-w})--(\ref{eq3-w}):
\begin{equation}
\label{relacja-u-w}
\int_0^a\int_0^x u(t,x_b,r)\,dx_b\,dr=\int_0^a\int_0^{\pi_rx} w(t,y,r)\,dy\,dr.
\end{equation}
Differentiating both sides of (\ref{relacja-u-w}) with respect to $a$ and $x$ we obtain  
\[
u(t,x,a)=\frac{\partial (\pi_ax)}{\partial x} w(t,\pi_a x,a)
 =\frac{g(\pi_ax)}{g(x)} w(t,\pi_a x,a).
\]
Using the above formula and Theorem~\ref{th:long-time-u} we get
\[
e^{-\lambda t}\frac{g(\pi_ax)}{g(x)} w(t,\pi_a x,a)\to (\Phi f_i)(x,a) \iint\limits_X w_0(\pi_ax_b,a)h(x_b,a)\,dx_b\,da
\]
in $E$ as $t\to\infty$,
where  $h(x_b,a)=\Psi(x_b,a)v(x_b,a)g(\pi_ax_b)/g(x_b)$. Finally we conclude that   
\[
e^{-\lambda t} w(t,x,a)\to h_i(x,a) \iint\limits_X w_0(\pi_ax_b,a)h(x_b,a)\,dx_b\,da
\]
in $L^1$, where $h_i(x,a)=(\Phi f_i)(\pi_{-a}x,a)g(\pi_{-a}x)/g(x)$.

\subsection{Solutions with values in the space of measures}
 \label{ss:slution-in-measures}
If we study the dynamics of population growth of microorganisms starting from a single cell, then initial distribution of the population is described by a singular measure, precisely with a delta Dirac measure. Thus it is natural to consider a model which describes the evolution of measures instead of $L^1$ functions. 
We can introduce such a model by considering weak solutions.
Let $\{U(t)\}_{t\ge 0}$ be the semigroup introduced in Section~\ref{s:asyp-beh} and let  
$\{U^{\odot}(t)\}_{t\ge 0}$  be the ``dual semigroup" given by $U^{\odot}(t)u_0=\Psi T^{\odot}(t)(\Phi u_0)$ (see formula (\ref{d:as-defU})). Denote by  $\mathcal M(X)$ the space of all finite Borel measures on $X$.
For any measure $\nu_0\in \mathcal M(X)$ 
we define the \textit{weak solution} of the problem (\ref{eq1})--(\ref{eq3}) as a function $u\colon [0,\infty)\to \mathcal M(X)$, $u(t)=\nu_t$, 
where the measures $\nu_t$   
satisfy the condition  
\begin{equation}
\label{measure-sol-def}
\iint\limits_X f(x_b,a)\,\nu_t(dx_b,da)=\iint\limits_X U^{\odot}(t)f(x_b,a)\,\nu_0(dx_b,da)
 \end{equation}
 for all  $f\in C(X)$.  
Since the set $X$ is compact, the existence and uniqueness of the measures $\nu_t$ is a simple consequence of the Riesz representation theorem.

One can ask  about the long-time behaviour of the measures $\nu_t$. We are interested in convergence of measures in the total variation norm. We denote by $d(\nu,\bar\nu)_{TV}$  the distance between $\nu$ and $\bar\nu$
in the \textit{total variation norm} in  $\mathcal M(X)$. We recall that  
\[
d(\nu,\bar\nu)_{TV}=(\nu-\bar\nu)^+(X)+(\nu-\bar\nu)^-(X),
\]
where the symbols $\nu^+$ and $\nu^-$ denote the  positive and negative part of a signed measure $\nu$.
We can formulate Theorem~\ref{th:long-time-u} in a slightly stronger form: 
\begin{proposition} 
\label{prop:long-time-u-tv}
Assume that conditions {\rm (A1)--(A7)} hold.
Let $\nu_0\in \mathcal M(X)$ and let $\nu_{\infty}$ be the measure given by  
\[
\nu_{\infty}(A)=\int\limits_A \Phi(x_b,a) f_i(x_b,a) \,dx_b\,da\cdot \iint\limits_X\Psi(x_b,a)v(x_b,a)\,\nu_0(dx_b,\,da)
\]
for $A\in\mathcal B(X)$.
Then 
\begin{equation}
\label{measure-sol-converg}
\lim_{t\to\infty}d(e^{-\lambda t}\nu_t,\nu_{\infty})_{TV}=0.
\end{equation}
\end{proposition}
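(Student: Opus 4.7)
The plan is to reduce the measure-valued statement to the $L^1$ AEG of Theorem~\ref{th:long-time-u}, via two moves: first handle the absolutely continuous case, then show that any $\nu_0\in\mathcal M(X)$ evolves to an absolutely continuous measure after finite time, and invoke the semigroup property.

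Suppose first that $\nu_0=u_0\,d\ell$ with $u_0\in E$. The definition \eqref{measure-sol-def}, together with the duality $\iint U^{\odot}(t)f\cdot u_0\,d\ell=\iint f\cdot U(t)u_0\,d\ell$ implicit in \eqref{d:as-defU}, shows that $\nu_t$ is absolutely continuous with density $U(t)u_0$. Since the total variation distance between absolutely continuous measures equals the $L^1$-distance of their densities,
$$d(e^{-\lambda t}\nu_t,\nu_{\infty})_{TV}=\Bigl\|e^{-\lambda t}U(t)u_0-\Phi f_i\!\iint\limits_X\! u_0\,\Psi v\,dx_b\,da\Bigr\|_E,$$
and Theorem~\ref{th:long-time-u} yields \eqref{measure-sol-converg} in this case.

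For arbitrary $\nu_0\in\mathcal M(X)$, I would transfer the Dyson--Phillips expansion to the measure-valued semigroup $V(t)\nu_0:=\nu_t$ and write $V(t)\nu_0=\sum_{n\ge 0}V_n(t)\nu_0$, where $V_n(t)\nu_0$ is the contribution from cells that have divided exactly $n$ times by time $t$. With $\bar A=\max_{x_b}\ga(x_b)$, boundedness of the cycle length forces $V_0(t)=V_1(t)=0$ for $t>2\bar A$. For each $n\ge 2$, a Jacobian calculation analogous to that in Lemma~\ref{lemma-partially-integral} shows that, conditioning on the initial state and on $T_1,\dots,T_{n-2}$, the map $(T_{n-1},T_n)\mapsto(x_b^n,\widetilde a)$ has Jacobian proportional to $g(2x_b^n)-2g(x_b^n)$, which by (A7) is nonzero on a nonempty open subset of $(\dxb,\gxb)$. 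Combining this with Sard's theorem and a change of variables, each $V_n(t)\nu_0$ is absolutely continuous with density in $E$, and therefore so is $\nu_{t_0}=\widetilde u_0\,d\ell$ for any $t_0>2\bar A$.

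With such a $t_0$ fixed, the semigroup property gives $\nu_t=U(t-t_0)\widetilde u_0\,d\ell$ for $t\ge t_0$, and the first step produces TV convergence of $e^{-\lambda t}\nu_t$ to a constant multiple of $\Phi f_i\,d\ell$. The constant is identified with $\iint_X\Psi v\,d\nu_0$ via the fact that $\Psi v$ is an eigenfunction of $\{U^{\odot}(t)\}$ for $\lambda$, so that
$$e^{-\lambda t_0}\!\iint\limits_X\!\widetilde u_0\,\Psi v\,dx_b\,da=e^{-\lambda t_0}\!\iint\limits_X\! U^{\odot}(t_0)(\Psi v)\,d\nu_0=\!\iint\limits_X\!\Psi v\,d\nu_0,$$
matching the density of $\nu_{\infty}$ in the statement. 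The principal obstacle is the absolute-continuity step: under (A7) the degeneracy locus $\{g(2x)=2g(x)\}$ may still carry positive measure, so the straightforward change of variables can leave a singular contribution, and one may need to iterate the smoothing over enough divisions---or combine Sard's theorem with the spreading of cell lineages through the good region---to push the singular mass to zero in total variation.
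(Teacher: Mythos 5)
Your reduction of the absolutely continuous case to Theorem~\ref{th:long-time-u}, and your identification of the limiting constant via the eigenfunction property of $\Psi v$ for $\{U^{\odot}(t)\}_{t\ge 0}$, are both fine and agree with what the paper does. The genuine gap is exactly the one you flag in your last sentence and then leave unresolved: the claim that $\nu_{t_0}$ is \emph{entirely} absolutely continuous for some finite $t_0$ is false in general under (A1)--(A7). After one division a Dirac mass at $(x_b^0,a^0)$ is carried onto the one-dimensional curve $T_1\mapsto (S_{T_1}(x_b^0),\,t-T_1)$, and after two divisions the relevant Jacobian is $\tfrac14 g(\pi_{\tau}S_{a+\tau_1}(x_b))-\tfrac12 g(S_{\tau}(S_{a+\tau_1}(x_b)))$, which (A7) guarantees to be nonzero only on \emph{some} nonempty open set. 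If $g(2x)=2g(x)$ on a whole subinterval, lineages whose sizes stay in that subinterval keep producing singular mass in every generation, so no single $V_n(t)\nu_0$, and no finite sum of them, need be absolutely continuous. Sard's theorem does not rescue this: it controls the Lebesgue measure of the set of critical values, not the pushforward measure, and the pushforward of $\nu_0$ under a map that is degenerate on a set of positive $\nu_0$-measure can perfectly well charge a Lebesgue-null set.

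The paper's argument avoids this by proving something weaker but sufficient: using (A6) and (A7) together, there exist $t_0>0$ and $\varepsilon>0$, \emph{independent of the initial measure}, such that the singular part satisfies $\nu^s_{t_0}(X)\le 1-\varepsilon$; iterating gives $\nu^s_t(X)\le(1-\varepsilon)^n$ for $t\ge nt_0$, so the singular mass decays geometrically rather than vanishing in finite time. One then fixes $\eta>0$, waits until the singular mass is below $\eta$, applies the $L^1$ asymptotic stability to the absolutely continuous part (your first step), and absorbs the residual singular mass into a $2\eta$ error in total variation. To repair your proof you would need to replace the finite-time absolute-continuity claim by such a uniform mass-transfer estimate --- which is the technical heart of the result, relying on the spreading of initial sizes guaranteed by (A6) to force every lineage through the region where $g(2x)\ne 2g(x)$ with probability bounded below --- and then add the $\eta$-approximation step; as written, the argument does not close.
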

We only give some idea of the proof of Proposition~\ref{prop:long-time-u-tv}. 
We consider weak solutions connected with the semigroup $\{P^{\odot}(t)\}_{t\ge 0}$,
i.e. we replace in (\ref{measure-sol-def}) the semigroup  
 $\{U^{\odot}(t)\}_{t\ge 0}$ by $\{P^{\odot}(t)\}_{t\ge 0}$, where $P^{\odot}(t)=e^{-\lambda t}T^{\odot}(t)$.
It is enough to check that if $\nu_0$ is a probability measure then 
$\lim_{t\to\infty}d(\nu_t,\mu_*)_{TV}=0$, where $d\mu_*=f_id\mu$.
We write $\nu_t$ as a sum $\nu_t^a+\nu_t^s$, where $\nu_t^a$ is the absolutely continuous part of $\nu_t$ with respect to the Lebesgue measure 
and $\nu_t^s$ is the singular part of $\nu_t$. 
We deduce from conditions (A6) and  (A7) that there exist $t_0>0$ and $\varepsilon>0$ independent of $\nu_0$ 
such that $\nu^s_{t_0}(X)\le 1-\varepsilon$. The proof of this part is very technical but it uses similar arguments as the proof of 
Lemma~\ref{lemma-partially-integral}. From the last inequality it follows that 
 $\nu^s_{t}(X)\le (1-\varepsilon)^n$ for $t\ge nt_0$. Fix $\eta>0$ and let $t_1>0$ be such that $\nu^s_{t_1}(X)\le\eta$.
Let $f=d\nu_{t_1}^a/d\mu$. Then $\lim_{t\to\infty}\|P(t)f-f_i\int f\,d\mu\|_{E_1}=0$. Since 
$\int f\,d\mu\ge 1-\eta$ and $\nu^s_{t_1}(X)\le\eta$, we have 
$d(\nu_t,\mu_*)_{TV}\le 2\eta$.
As $\eta>0$ can be chosen arbitrary small we finally obtain   
$\lim_{t\to\infty}d(\nu_t,\mu_*)_{TV}=0$.

It should be noted that a similar result 
can be obtained by using the theory of positive recurrent and aperiodic Harris processes (see Theorem 13.3.3 of~\cite{Meyn-Tweedie}),
but to apply this theorem we need to formulate the problem properly in the language of stochastic processes. First, we construct a family of Markov processes corresponding to the semigroup $\{P(t)\}_{t\ge 0}$. In particular, we need to define additionally the processes started from points $(x_b,\ga(x_b))$.
Since Theorem 13.3.3 applies to discrete-time processes, we consider this Markov family for times $t=0,1,2,\dots$ and check assumptions of this theorem.
As a result we obtain that $\lim_{n\to\infty}d(\nu_n,\mu_*)_{TV}=0$. Finally, we pass from discrete time convergence to continuous time convergence.

\subsection{Case  $g(2x)=2g(x)$}
 \label{ss:g(2x)=2g(x)}
 A function $g$ satisfying condition $g(2x)=2g(x)$ for all $x\in [\dxb,\gxb]$ can be constructed in the following way.
Let $g\colon [\dxb,2\dxb]\to (0,\infty)$ be a given $C^1$-function such that $g(2\dxb)=2g(\dxb)$ and $g'(2\dxb)=g'(\dxb)$.
Then we define $g(x)=2^ng(2^{-n}x)$ for $ x\in [2^n \dxb,2^{n+1}\dxb]$. 
 
Observe that if $g(2x)=2g(x)$ for all $x\in [\dxb,\gxb]$, then the semigroup 
$\{U(t)\}_{t\ge 0}$ has no asynchronous exponential growth.
Indeed, consider a cell with initial size $x_b$.
Fix time $t>0$ and assume that the cell splits at age $a\le t$. 
Then the daughter cells at time $t$ have size 
\[
x(a)=\pi_{t-a}(\tfrac 12\pi_ax_b).
\]
Since
\[
x'(a)=
-g(\pi_{t-a}(\tfrac 12\pi_ax_b))
+
\frac{g(\pi_{t-a}(\tfrac 12\pi_ax_b))}
{g(\tfrac 12\pi_ax_b)}
\cdot \frac12g(\pi_ax_b)=0,
\]
the function $x$ is constant and $x(a)=\tfrac 12\pi_tx_b$. Thus the size of all daughter cells is exactly twice smaller than the size of the mother cell.
If $x_n(t)$ is the size of a cell from the $n$th generation then its mother, grandmother, etc. cells have sizes  $2x_n(t)$, $4x_n(t)$, $\dots$ 
But since cells have minimum and maximum size $\dxb$ and $2\gxb$, the maximum number of existing generations at a given time $t$ 
is not greater than $2+\log_2(\gxb/\dxb)$. Moreover, if $x_1\in (\dxb,2\dxb)$ and 
$f_1(x_b,a)=\mathbf 1_{(\dxb,x_1)}(x_b)$,  $f_2(x_b,a)=\mathbf 1_{(x_1,2\dxb)}(x_b)$, then  $U(t)f_1\cdot U(t)f_2\equiv 0$ for all $t\ge 0$. Consequently, 
the semigroup $\{U(t)\}_{t\ge 0}$ has no asynchronous exponential growth.

\begin{remark}
\label{r:irred-overlaping}
One can check that the semigroup $\{U(t)\}_{t\ge0}$ is \textit{irreducible} i.e. $\int_0^{\infty}U(t)f\,dt>0$ a.e. even if (A7) does not hold.
Our example shows that a semigroup can be irreducible but not overlapping supports.
A stochastic semigroup $\{P(t)\}_{t\ge0}$  is called \textit{overlapping supports} if 
$P(t)f_1\cdot P(t)f_2\ne 0$ for any two densities $f_1$ and $f_2$ and some $t=t(f_1,f_2)$.
Another simple example of irreducible stochastic semigroup which does not overlap supports
is the rotation semigroup. If  $X=S^1$ is a unit circle on the complex plain with centre 
$z_0=0$, $\,\Sigma=\mathcal B(X)$ is the 
$\sigma$-algebra of Borel subsets of $X$ and 
$m$ is the arc-Lebesgue measure on $X$, 
the rotation semigroup $\{P(t)\}_{t\ge 0}$
is given by $P(t)f(z)=f(ze^{it})$. 
\end{remark}

Now we consider a special case when $g(x)=\kappa x$, $\kappa>0$. We start at time $t=0$ with a single cell with size $x$.
Cells from the $n$th generation have  size $2^{-n}e^{\kappa t}x$ at time $t$. 
Then $\bar p(2^{-n}e^{\kappa t}x,a)\Delta t$ is  the probability that a cell 
from the $n$th generation with age $a$ splits in the time interval of the length $\Delta t$.
This observation allows us to describe the evolution of the population 
using discrete parameters. 
Denote by $w_n(t,a)$ the number of cells from the $n$th generation with age $a$  at time $t$.
Then the functions $w_n$ satisfy the following  infinite system of partial differential equations with boundary conditions:
\begin{align*}
&\frac{\partial w_n}{\partial t}(t,a)
 +\frac{\partial w_n}{\partial a}(t,a)
  =-\bar p\big(2^{-n}e^{\kappa t}x,a\big)w_n(t,a),\\
&w_n(t,0)=2\int_{0}^{\infty}\bar p\big(2^{1-n}e^{\kappa t}x,a\big)w_{n-1}(t,a)\,da. 
\end{align*}

 It should be noted that it is not easy to find a direct formula for the eigenvector  $f_i(x_b,a)$ of the operator $\mathcal A$ even in the case $g(x)=\kappa x$.
Indeed, we have $\pi_{-a}(2x_b)=2e^{-\kappa a}x_b$ and
\begin{equation*}
\begin{aligned}
P_af(x_b)&=\frac{2g(\pi_{-a}(2x_b))}{g(2x_b)}f(\pi_{-a}(2x_b))=\frac{2\pi_{-a}(2x_b)}{2x_b}f(\pi_{-a}(2x_b))\\
&=2e^{-\kappa a}f(2e^{-\kappa a}x_b).
\end{aligned}
 \end{equation*}
Then $f_i(x_b,a)=e^{-\lambda a}f_i(x_b,0)$, where $\lambda$ and $f_i(x_b,0)$ should be found by solving the following equation
\begin{equation*}
f_i(x_b,0)=\int_0^{\infty}4e^{-(\lambda+\kappa)a}
q(2e^{-\kappa a}x_b,a)f_i(2e^{-\kappa a}x_b,0)\,da,
\end{equation*}
which is not a simple task.

\section{Comparison with experimental data and other models}  
 \label{s:other2}
Modern experimental techniques enable studies of individual
cells growth in well-controlled environments. Especially interesting are experimental results concerning rod-shaped bacteria,
 for example \textit{E. coli}, \textit{C. crescentus} and \textit{B. subtilis} \cite{CSK,I-B,T-A,Wang},
because they change only their length. Although such bacteria have similar shape there are 
variety of distinct models of cell cycle and cell division. For example, we  consider  models with symmetric or asymmetric divisions,
with different velocities of proliferation, deterministic or stochastic growth of individuals or
models based on special assumptions as fixed cell length extension or models with target size division. 
  We give a short review of such models and show how to incorporate them to our model. 

\subsection{Models with exponential growth}
\label{ss:exponential growth}
Since experimental data suggest that cells grow exponentially, one can find a number of models with the assumption  $g(x)=\kappa x$
 but with various descriptions of the cell cycle length.

In \cite{CSK,GH,T-A,VK} it is considered an \textit{additive model} (or a \textit{constant $\Delta$ model}), where it is assumed that the difference 
$\Delta(x_b)=x_d-x_b$
between the size at division $x_d$  and the initial size $x_b$ of a cell 
is a random variable independent of $x_b$.
From this assumption it follows that 
\[
\tau(x_b)=\kappa^{-1}\ln((x_b+\Delta)/x_b).
\]
If $h(x)$ is the density distribution of $\Delta$, then 
\begin{equation}
\label{q-delta} 
q(x_b,a)=\kappa x_b e^{\kappa a} h\big(x_b e^{\kappa a}-x_b\big)
\end{equation}
 is the density of $\tau(x_b)$. 
We obtain a special case of our model if the density distribution of $\Delta$ is positive on the interval $(\dxb,\gxb)$.
According to experimental data from \cite{T-A} 
the coefficient of variation  $c_v$ of $\Delta$ for 
\textit{E. coli} is in the range of $0.17$ to $0.28$ depending on the different growth conditions.
We recall that   $c_v=\sigma/\mu$, where $\sigma$ is the standard deviation, and $\mu$ is the mean.

In \cite{Amir,Jafarpour} it is assumed  that
a cell with initial size $x_b$ attempts to
divide at a target size $x_d = f(x_b)$. Then the expected length of the cell cycle is $\tau_0(x_b)=\kappa^{-1}\ln(f(x_b)/x_b)$, but
$\tau_0(x_b)$  is additively perturbed by a symmetric random variable $\xi$, and finally $\tau(x_b)=\tau_0(x_b)+\xi$. 
If $h(a)$ is the density distribution of $\xi$, then $q(x_b,a)=h(a-\tau_0(x_b))$ is the density of $\tau(x_b)$. 
The authors assume in these papers  that $h$ has a normal distribution but in this case $\tau(x_b)$ can be negative therefore  
 a truncated normal distribution located in some interval $[-\varepsilon,\varepsilon]$   seems to be more suitable.  
 They also assume  that $f(x_b)=2x_b^{1-\alpha}x_0^{\alpha}$, $\alpha\in [0,1]$ and $x_0>0$.
 If $\alpha>0$, $\dxb=x_0e^{-\kappa\varepsilon /\alpha}$ and $\gxb=x_0e^{\kappa\varepsilon /\alpha}$, then we obtain a particular case of our model. 
 If $\alpha=0$, then $\tau_0\equiv \kappa^{-1}\ln 2$ and the length of cell cycle does not depend on $x_b$. In this case a daughter cell size is distributed 
in some neighbourhood of the initial mother cell size, so there is no minimum $\dxb$ and maximum size $\gxb$.

\subsection{Paradoxes of exponential growth}
\label{ss:paradoxes-exp-growth}
Models with exponential growth law can lead to some odd  mathematical results.
If the population starts with a single cell of size $x$,  cells from $n$th generation have size $x_n(t)=2^{-n}e^{\kappa t}x$ at time $t$. 
Since $ \dxb\le x_n(t)\le \gxb$,  population consists of a few generations at each time  and all cells in each generation have the same size.   
Usually the quotient $\gxb/\dxb$ is not too large. The initial size for \textit{E. coli}  under steady-growth conditions 
is  $x_b=2.32\, \pm\, 0.38\,\,\mu m$ (mean\,$\pm$\,SD) \cite{CSK}. 
Thus we can assume that in this case  $\gxb/\dxb<2$.
Then it is easy to check that if
\[
t\in \left(\frac {n+\log_2(\gxb/x)}{\kappa\log_2e},\frac {1+n+\log_2(\gxb/x)}{\kappa\log_2e}\right),
\]
the population consists only of cells from the $n$th generation, thus all cells have the same size and they cannot split in this time interval.
Consequently the size of the population never reaches an exponential (balanced) growth.
On this point we also observe that the large quotient $\gxb/\dxb$ helps the population to stabilize its growth,
which explains  why  
in the model with target size division  \cite{Jafarpour} it takes the population a longer time to reach its balanced growth for greater $\alpha$,
because $\gxb/\dxb=e^{2\kappa\varepsilon /\alpha}$.

The exponential growth law of cells  should be a little bit modified in order to achieve AEG.
For example it is enough to assume that $\kappa$ depends on the initial size $x_b$. But according to the experimental results,
the average growth rate does not depend on the initial size of cells. On the other hand,  even if  a population 
grows under perfect conditions the individual cells have different growth rates: 
the standard deviation  of the growth rate  is 15\%  of their respective means \cite{T-A}.  
Thus there is other factor called maturity,  which decides about the growth rate of an individual cell. The mathematical models based on the concept of maturity
were formulated in the late sixties \cite{LR,Rubinow}.
In such models the growth rate is identified  with maturation velocity $v$
which is constant during the life of  cell and
is inherited in a random way from mother to daughter cells.  

Rotenberg \cite{Rotenberg} considered a version of maturity models  with random jumps of $v$ during the cell cycle.
If we replace random jumps of $v$ by stochastic fluctuations of $\kappa$, 
we obtain a cell growth model described by a stochastic equation 
considered in the next subsection.

We consider here a simple generalization of our model assuming that $\kappa$ is a random variable with the distribution dependent on $x_b$. 
Let the function $r\mapsto  k(r|x_b)$ be the density of  $\kappa$.
The question is how to describe the joint distribution of age and initial size in this case.
Equations (\ref{eq1}) and (\ref{eq3}) remain the same and 
it is enough to derive a version of the boundary condition (\ref{eq2}).
Denote by $f(x;x_b,a)$ the density distribution of the random variable $\xi_a^{x_b}=x_be^{\kappa a}$.   
Then (\ref{eq2}) takes the form  
\begin{equation}
\label{eq2-s}
u(t,x,0)=4\iint\limits_X  f(2x;x_b,a)p(x_b,a)u(t,x_b,a)\,dx_b\,da. 
\end{equation}
It remains to find the function $f(x;x_b,a)$. We have 
\[
\Prob(x_be^{\kappa a}\le x)=\Prob\big(\kappa\le a^{-1}\ln(x/x_b)\big)
=\int_0^{a^{-1}\ln(x/x_b)}k(r|x_b)\,dr.
\]
Hence 
\[
f(x;x_b,a)=\frac1{ax}k(a^{-1}\ln(x/x_b)|x_b).
\]
At first glance formulae  (\ref{eq2}) and  (\ref{eq2-s}) differ significantly, but if we  
replace in (\ref{eq2-s}) the term $f(2x;x_b,a)$ by the delta Dirac $\delta_{S_a(x_b)}(x)$ we will receive
(\ref{eq2}).

\subsection{Stochastic growth of $x$}
\label{ss:stochastic-growth}
The size of a cell having initial size $x_b$ grows according to It\^o stochastic differential equation 
\begin{equation}
\label{stoch-grow}
d\xi_t^{x_b}= \kappa \xi_t^{x_b}\,dt+ \sigma(\xi_t^{x_b})\,dB_t, 
\end{equation}
where $B_t$, $t\ge 0$, is a one dimensional Wiener process (Brownian motion), and $\kappa>0$.
In \cite{I-B,PJI-B} the authors assume that  
$\sigma(x)=\sqrt{D}x^{\gamma}$, where $D>0$ and $\gamma\in (0,1)$. 
The  great strength of this formula  is that  equation (\ref{stoch-grow}) was 
intensively studied for such $\sigma$ and we can solve (\ref{stoch-grow}) and find various properties of solutions.
But there is one weak point: the size can go to zero and even solutions can be absorbed at zero.
To omit this problem we propose to assume that $\sigma\colon [\dxb,\infty)\to \mathbb R$ is a $C^1$-function and 
$\sigma(\dxb)=0$. Then $\xi_t^{x_b}>\dxb$ for $t>0$. It should be noted that solutions can decrease at some moments, i.e. a cell can shrink, 
but if the diffusion coefficient $\sigma$ is small, we observe exponential growth with small stochastic noise.    
If $f(x;x_b,a)$ is the density distribution of the random variable $\xi_a^{x_b}$, then    
the joint distribution of age and initial size $u(t,x_b,a)$ satisfies equations 
(\ref{eq1}), (\ref{eq3}), (\ref{eq2-s}).

\subsection{Models with asymmetric division and with slow-fast proliferation}
\label{ss:asymetric}
A lot of cellular populations are heterogeneous. For example, \textit{C. crescentus} has an asymmetric cell division;
\textit{B. subtilis} occasionally produces minicells;  melanoma cells  have slowly and quickly proliferating cells \cite{Perego}; 
and precursors of blood cells replicate and maturate going through the levels of morphological development \cite{Marciniak}.
It is difficult to find one universal model of the evolution of heterogeneous populations. Now we present a model of
the distribution of heterogeneous population based on similar
assumptions as the model presented in Section~\ref{s:model}.
We divide the population into a number of subpopulations. We assume that cells in the $i$th subpopulation 
grow according to the equation $x'=g_i(x)$ and  
their length of the cell cycle has the probability density distribution $q_i(x_b,a)$.
We also assume that $r_{ij}$ is the probability that a daughter of a cell from the $i$th subpopulation belongs to the $j$th subpopulation
and the daughter has initial size $\beta_{ij}x$, where $x$ is the size of the mother cell at division. 
As in Section~\ref{s:model} we introduce the function  
\[
p_i(x_b,a)=\frac{q_i(x_b,a)}{\int_a^{\infty} q_i(x_b,r)\,dr}
\]
and operators $P^{ij}_a$ which describe the relation between the density of the initial sizes of mother and daughter cells
satisfying the equation:
 \begin{equation*}
\int_{\dxb}^{\beta_{ij}\pi^i_ay}P^{ij}_af(x_b)\,dx_b=r_{ij}\int_{\dxb}^{y}f(x_b)\,dx_b.
 \end{equation*}
Then 
\begin{equation*}
P^{ij}_af(x_b)=\frac{r_{ij}}{\beta_{ij}}\frac{g(\pi^i_{-a}(x_b/\beta_{ij}))}{g(x_b/\beta_{ij})}f(\pi^i_{-a}(x_b/\beta_{ij})).
 \end{equation*}
We denote by $u_i(t,x_b,a)$ the number of individuals in the $i$th population  having initial size $x_b$ and age $a$ at time $t$. 
Then the system (\ref{eq1})--(\ref{eq3}) will be replaced by the following one
\begin{align}
\label{eq1-h}
&\frac{\partial u_i}{\partial t}(t,x_b,a)
 +\frac{\partial u_i}{\partial a}(t,x_b,a)
 =-p_i(x_b,a)u_i(t,x_b,a),\\
&u_j(t,x_b,0)=2\sum\limits_i\int_0^{\infty}P^{ij}_a(p_i(x_b,a)u_i(t,x_b,a))\,da, 
\label{eq2-h}\\
&u_i(0,x_b,a)=u_{i0}(x_b,a). 
\label{eq3-h}
\end{align}

In some cases of asymmetric division the size of daughter cells is not strictly determined and it is better to consider a model 
where the density $k(x_b|x_d)$ describes the distribution of the initial size of a daughter cell $x_b$
if the mother cell has the size $x_d$, see e.g. \cite{AK,GW,Heijmans,KDAT,RP}.

As an example of application of the model (\ref{eq1-h})--(\ref{eq3-h}) we consider \textit{C. crescentus} which  has asymmetric cell division into a "stalked" cell which can replicate and  a mobile "swarmer" cell which 
differentiates into a stalked cell after a short period of motility. Thus we have two subpopulations: the first -- stalked cells and the second -- swarmer cells.  Then $r_{ij}=1/2$ for $i=1,2$ and $j=1,2$. 
The stalked daughter has length of $0.56\,\pm\,0.04$ (mean\,$\pm$\,SD) of the mother cell \cite{CSK}. Hence we can assume that $\beta_{11}=\beta_{21}=0.56$ and 
$\beta_{12}=\beta_{22}=0.44$. If we assume that both stalked and swarmer cells have the same growth rate $\kappa$, i.e. $g_i(x)=\kappa x$, then
$q_2(x_b,a)=q_1(x_b,a-\rho)$, where  $\rho$ satisfies the formula $e^{\kappa \rho}=0.56/0.44$
and $q_1=q$ is given by  (\ref{q-delta}).

A model for the growth \textit{B. subtilis}  should be more advanced.  \textit{B. subtilis} can divide symmetrically to make two daughter cells (binary fission),
but some mutants split asymmetrically,  producing a single endospore, which can differentiate to a "typical" cell. Assume that the first population consists of typical cells and the second of minicells.  If $\mathfrak p$ is the probability of asymmetric fission, then 
$r_{11}=r_{21}=1-\mathfrak p+\mathfrak p/2=1-\mathfrak p/2$
and $r_{12}=r_{22}=\mathfrak p/2$. Some information on the size of minicells can be found in \cite{KH}. 

In a model which describes slowly and quickly proliferating cells we should assume that the length of the cell cycle of slowly proliferating cells is longer than 
in quickly proliferating cells and slowly proliferating cells also grow slower. Thus the sensible assumptions are:
$g_1(x)<g_2(x)$ and 
\[
\int_0^a q_1(x_b,r)\,dr <\int_0^aq_2(x_b,r)\,da\quad\text{for $a<\ga_1(x_b)$}.
\]
We should  also  assume that there is some transition between both subpopulations. Other model of the growth of the population with  slowly and quickly proliferating cells was recently studied in \cite{Vittadello}.

\section*{Acknowledgments}
This research was partially supported by 
the  National Science Centre (Poland)
Grant No. 2017/27/B/ST1/00100.


\begin{thebibliography}{11}

\bibitem{Amir}
Amir A. Cell size regulation in bacteria. \textit{Phys\ Rev\ Lett.} 2004; 112:208102 (5 pages).
DOI:10.1103/PhysRevLett.112.208102.

\bibitem{AL}
Anselone PM, Lee JW. Spectral properties of integral operators with nonnegative kernels.
\textit{Linear Algebra Appl.} 1974; 9:67-87.
DOI: 10.1016/0024-3795(74)90027-5.

\bibitem{AK}
Arino O, Kimmel M. Asymptotic behavior of a nonlinear functional-integral equation of cell kinetics with unequal division.
\textit{J Math Biol.} 1989; 27:341-354.
DOI: 10.1007/BF00275817.

\bibitem{ASW}
Arino O, Sanchez E, Webb GF. Necessary and sufficient conditions for asynchronous exponential growth in age structured cell populations with quiescence.
\textit{J Math Anal Appl.} 1997; 215:499-513.
DOI: 10.1006/jmaa.1997.5654.

\bibitem{BPR}
Banasiak J, Pich\'or K,   Rudnicki R. Asynchronous exponential growth of a general structured population model.
\textit{Acta Appl Math.} 2012; 119:149-166.
DOI: 10.1007/s10440-011-9666-y.

\bibitem{BA} 
Bell GI,  Anderson EC. Cell growth and division. I. A Mathematical model with applications to cell volume distributions
in mammalian suspension cultures. \textit{Biophys J.} 1967; 7:329-351. 
DOI: 10.1016/S0006-3495(67)86592-5.

\bibitem{CSK}
Campos M, Surovtsev IV,  Kato S, Paintdakhi A, Beltran B,  Ebmeier SE, Jacobs-Wagner C. 
A constant size extension drives bacterial cell size homeostasis. 
\textit{Cell.}  2014; 159:1433-1446.
DOI: 10.1016/j.cell.2014.11.022.

\bibitem{Degla}
Degla G. An overview of semi-continuity results on the spectral radius and positivity.
\textit{J. Math. Anal. Appl.}  2008; 338:101-110.
DOI: 10.1016/j.jmaa.2007.05.011.

\bibitem{DHT}
Diekmann O, Heijmans HJAM, Thieme HR. On the stability of the cell size distribution. 
\textit{J\ Math\ Biol.} 1984; 19:227-248. 
DOI: 10.1007/BF00277748.

\bibitem{Doumic}
Doumic M. Analysis of a population model structured by the cells molecular content. 
\textit{Math Model Nat Phenom.} 2007; 2:121-152.
DOI: 10.1051/mmnp:2007006.


\bibitem{Evans}
Evans L. \textit{Partial differential equations}.  Providence, R.I.: American Mathematical Society; 1998.
DOI: 10.1090/gsm/019.

\bibitem{GM}
Gabriel P, Martin H.  Steady distribution of the incremental model for bacteria proliferation. 
\textit{Netw Heterog Media.}  2019; 14:149-171. 
DOI: 10.3934/nhm.2019008.


\bibitem{Gerlach-Gluck1}
Gerlach M,  Gl\"uck J. Convergence of positive operator semigroups.
\textit{Trans Amer Math Soc.} 2019; 372:6603-6627.
DOI: 10.1090/tran/7836.

\bibitem{Martin-Gluck2}
Gl\"uck J, Martin FG. Uniform convergence of stochastic semigroups. 
\textit{Isr. J. Math.} 2021;  
DOI: 10.1007/s11856-021-2240-z.


\bibitem{greiner} 
\newblock Greiner G.
\newblock  Perturbing the boundary conditions of a generator. 
\newblock \emph{Houston J. Math.} 1987; 13:213-229. 

\bibitem{GMTK}  
\newblock  Gwi\.zd\.z P, Tyran-Kami\'nska M.
\newblock Positive semigroups and perturbations of boundary conditions.
\newblock \emph{Positivity} 2019; 23:921-939.
DOI: 10.1007/s11117-019-00644-w.

\bibitem{GH} 
Gyllenberg M,  Heijmans HJAM. An abstract delay-differential equation modelling size dependent cell growth and division. 
\textit{SIAM J Math Anal.} 1987; 18:74-88.
DOI: 10.1137/0518006.


\bibitem{GW}
Gyllenberg M, Webb GF. Age-size structure in populations with quiescence.
\textit{Math Biosci.} 1987; 86:67-95.
DOI: 10.1016/0025-5564(87)90064-2.

\bibitem{Heijmans}
Heijmans HJAM. On the stable size distribution of populations reproducing by fission into two unequal parts.
\textit{Math Biosci.} 1984; 72:19-50.
DOI: 10.1016/0025-5564(84)90059-2.

\bibitem{I-B}
Iyer-Biswas S, Wright CS,  Henry JT, Lo K, Burov S, Lin Y, Crooks GE,  Crosson S,  Dinner AR,   Scherer NF.
Scaling laws governing stochastic growth and division of single bacterial cells. 
\textit{Proc Natl Acad Sci USA.} 2014; 111:15912-7. 
DOI: 10.1073/pnas.1403232111.

\bibitem{Jafarpour} 
Jafarpour F. Cell size regulation induces sustained oscillations in the population growth rate.
\textit{Phys Rev Lett.} 2019; 122:118101 (6 pages). 
DOI: 10.1103/PhysRevLett.122.118101.


\bibitem{KDAT}
Kimmel M, Darzynkiewicz Z, Arino O, Traganos F.
Analysis of a cell cycle model based on unequal division of metabolic constituents to daughter cells during cytokinesis. 
\textit{J Theor Biol.} 1984; 110:637-664.
DOI: 10.1016/s0022-5193(84)80149-6.

\bibitem{KH}
Koch AL, Holtje JV. A physical basis for the precise location of the division site of rod-shaped bacteria: the central stress model.
\textit{Microbiology.} 1995; 141:3171-3180.
DOI: 10.1099/13500872-141-12-3171.

\bibitem{LiM} Lasota A,  Mackey MC. \textit{Chaos, Fractals and Noise. Stochastic Aspects of Dynamics}, II edition, Springer Applied Mathematical Sciences, vol. 97. New York: Springer; 1994.
DOI: 10.1007/978-1-4612-4286-4.

\bibitem{LR}
Lebowitz JL, Rubinow SL. A theory for the age and generation time distribution of microbial population.
\textit{J Math Biol.} 1974; 1:17-36. 
DOI: 10.1007/BF02339486.

\bibitem{LRB}
Luzyanina T, Roose D, Bocharov G.
Distributed parameter identification for a label-structured cell population dynamics model using CFSE histogram time-series data.
\textit{J Math Biol.}  2009; 59:581--603.
DOI: 10.1007/s00285-008-0244-5.

\bibitem {M-R}
Mackey MC,  Rudnicki R. Global stability in a delayed partial differential equation describing cellular replication.
\textit{J\ Math\ Biol.} 1994; 33:89-109.
DOI: 10.1007/BF00160175.

\bibitem {Mac-Tyr}
Mackey MC, Tyran-Kami\'nska M.
Dynamics and density evolution in piecewise deterministic growth processes.
\textit{Ann Polon Math.} 2008; 94:111-129. 
DOI: 10.4064/ap94-2-2.
 

\bibitem{Marciniak}
Marciniak-Czochra A, Stiehl T,  Ho AD, J\"ager W,  Wagner W.
Modeling of asymmetric cell division in hematopoietic stem cells\,--\,regulation of self-renewal is
essential for efficient repopulation. \textit{Stem Cells Dev.}  2009; 18:377-385. 
DOI: 10.1089/scd.2008.0143.


\bibitem{MC}
Matson JP,  Cook JG. Cell cycle proliferation decisions: the impact of single cell analyses. \textit{FEBS J.} 2017; 284:362-375.
DOI: 10.1016/j.bpj.2017.12.041.

\bibitem{Meyn-Tweedie}
Meyn S, Tweedie R. \textit{Markov chains and stochastic stability}. London:  Springer-Verlag; 1993.
DOI: 10.1007/978-1-4471-3267-7

\bibitem{Morgan}
Morgan DO. \textit{The Cell Cycle: Principles of Control}. London: New Science Press; 2007. 

\bibitem{Perego}
Perego M,  Maurer M, Wang JX, Shaffer S,  M\"uller AC, Parapatics K, Li L, Hristova D, Shin S, Keeney F, Liu S, Xu X, Raj A, Jensen JK,  Bennett KL,  Wagner SN, Somasundaram R, Herlyn M. 
A slow-cycling subpopulation of melanoma cells with highly invasive properties. \textit{Oncogene.}  2018; 37:302-312.
DOI: 10.1038/onc.2017.341.

\bibitem{Pichor-MCM}
Pich\'or K.
Asymptotic behaviour of a structured population model. \textit{Math Comput Model.} 2013; 57:1240-1249.
DOI: 10.1016/j.mcm.2012.10.027.

\bibitem{PR-jmaa2}
Pich\'or K,  Rudnicki R. Continuous Markov semigroups and stability of transport equations. \textit{J Math\ Anal\ Appl.} 2000; 249:668-685.
DOI: 10.1006/jmaa.2000.6968.

\bibitem{PR-cell-cyc}
Pich\'or K,  Rudnicki R. 
Applications of stochastic semigroups to cell cycle models.
\textit{Discrete Contin Dyn Syst B.} 2019; 24:2365-2381.
DOI:  10.3934/dcdsb.2019099.

\bibitem{PJI-B}
Pirjol D, Jafarpour F, Iyer-Biswas S. Phenomenology of stochastic exponential growth. \textit{Phys Rev E.} 2017; 95:062406 (12 pages).   
DOI: 10.1103/PhysRevE.95.062406.   


\bibitem{RP-M}
Pujo-Menjouet L,  Rudnicki R. Global stability of cellular populations with unequal division.
\textit{Canad\ Appl\ Math\ Quart.}  2000; 8:185-202.
DOI: 10.1216/camq/1032375042.

\bibitem{Rotenberg}
Rotenberg M. Transport theory for growing cell populations. \textit{J Theor Biol.} 1983; 103:181-199. 
DOI: 10.1016/0022-5193(83)90024-3.

\bibitem{Rubinow}
Rubinow SI.  A maturity time representation for cell populations. \textit{Biophy J.} 1968; 8:1055-1073. 
DOI: 10.1016/S0006-3495(68)86539-7.

\bibitem{RP}
Rudnicki R, Pich\'or K. Markov semigroups and stability of the cell maturation distribution.
\textit{J Biol Systems} 2000; 8:69-94.
DOI: 10.1142/S0218339000000067.

\bibitem{RT-K-k}
Rudnicki R, Tyran-Kami\'nska M. {\it Piecewise Deterministic Processes in Biological Models}. 
SpringerBriefs in Applied Sciences and Technology, Mathematical Methods. 
Cham, Switzerland: Springer; 2017.
DOI: 10.1007/978-3-319-61295-9.

\bibitem{T-A}
Taheri-Araghi  S, Bradde S, Sauls JT, Hill NS, Levin PA, Paulsson J, Vergassola M, Jun S.  Cell-size control and homeostasis in bacteria. \textit{Current Biology.} 2015; 25:385-391.
DOI: 10.1016/j.cub.2014.12.009.

\bibitem{Taniguchi}
Taniguchi Y, Choi PJ,  Li G-W, Chen H, Babu M, Hearn J, Emili A, Xie XS.
Quantifying E. coli proteome and transcriptome with singlemolecule sensitivity in single cells.
\textit{Science.} 2010; 329:533-538. 
DOI: 10.1126/science.1188308.

\bibitem{Vittadello-exp}
Vittadello ST,  McCue SW, Gunasingh G,  Haass NK,  Simpson MJ.
Mathematical models for cell migration with real-time cell cycle dynamics. \textit{Biophys J.} 2018; 114:1241-1253. 
DOI: 10.1016/j.bpj.2017.12.041.

\bibitem{Vittadello}
Vittadello ST,  McCue SW, Gunasingh G,  Haass NK,  Simpson MJ.
A novel mathematical model of heterogeneous cell proliferation. \textit{J Math Biol.}  2021; 82. 
DOI:10.1007/s00285-021-01580-8.

\bibitem{VK}
Voorn WJ, Koppes LJH. Skew or third moment of bacterial generation times.
\textit{Arch Microbiol.} 1998; 169:43-51.
DOI: 10.1007/s002030050539.


\bibitem{Wang}
Wang P, Robert L, Pelletier J, Dang WL, Taddei F, Wright A, Jun S.
Robust growth of Escherichia coli. \textit{Curr Biol.}  2010; 20:1099-1103.
DOI: 10.1016/j.cub.2010.04.045.

\bibitem{Webb-cc}
Webb GW. A model of proliferating cell populations with inherited cycle length. 
\textit{J Math Biol.}  1986; 23:269-282.
DOI: 10.1007/BF00276962.

\bibitem{Webb-aeg}
Webb GW. An operator-theoretic formulation of asynchronous exponential growth. \textit{Trans Amer Math Soc.}  1987; 303:751-763.
DOI: 10.1090/S0002-9947-1987-0902796-7.


\end{thebibliography}
\end{document}